\newtheorem{thm}{Theorem}[section]
\newtheorem{cor}[thm]{Corollary}
\theoremstyle{definition}
\newtheorem{defn}[thm]{Definition} 
\theoremstyle{remark}
\newtheorem{rem}{Remark}
\newcommand{\spt}{\textnormal{supp}}
\def\R2n{{\mathbb R}^{2n}}
\def\R^2{{\mathbb R}^2}
\def\R2n{{\mathbb R}^{2n}}
\def\R{{\mathbb R}}
\title[Multipliers on commutative hypergroups]{$L^p$-$L^q$ Multipliers on commutative hypergroups}
\author{Vishvesh Kumar}
\address{Vishvesh Kumar \endgraf
	Department of Mathematics: Analysis, Logic and Discrete Mathematics
	\endgraf
	Ghent University, Belgium}
\email{vishveshmishra@gmail.com}
\author[M. Ruzhansky]{Michael Ruzhansky}
\address{
	Michael Ruzhansky:
	\endgraf
	Department of Mathematics: Analysis, Logic and Discrete Mathematics
	\endgraf
	Ghent University, Belgium
	\endgraf
	and
	\endgraf
	School of Mathematical Sciences
	\endgraf
	Queen Mary University of London
	\endgraf
	United Kingdom
	\endgraf
	{\it E-mail address} {\rm michael.ruzhansky@ugent.be}
}
\begin{document}
	
	\begin{abstract} The main purpose of this paper is to prove H\"ormander's  $L^p$-$L^q$ boundedness of Fourier multipliers on commutative hypergroups. We carry out this objective by establishing Paley inequality and Hausdorff-Young-Paley inequality for commutative hypergroups. We show the  $L^p$-$L^q$ boundedness of the spectral multipliers for the generalised radial Laplacian by examining our results on Ch\'{e}bli-Trim\`{e}che hypergroups. As a consequence, we obtain embedding theorems and time asymptotics for the $L^p$-$L^q$ norms of the heat kernel for generalised radial Laplacian. Finally,  we present applications of the obtained results to study the well-posedness of nonlinear partial differential equations.       
	\end{abstract}
	\keywords{Ch\'{e}bli-Trim\`{e}che hypergroups, Fourier multipliers, Spectral multipliers, Jacobi transform, Hankel transform, Generalised radial Laplacian, Nonlinear PDEs}
	\subjclass[2010]{Primary 43A62, 42B10  Secondary 42A45, }
	\maketitle
	
	\section{Introduction}  
	\noindent  In this paper we establish $L^p$-$L^q$ boundedness of Fourier multipliers on commutative hypergroups. 
	As an application we will prove the  boundedness result for spectral multipliers in the context of  Ch\'{e}bli-Trim\`{e}che hypergroups. Bloom and Xu \cite{BX99} proved the  H$\ddot{\text{o}}$rmander multiplier theorem concerning the $L^p$-boundedness of Fourier mutlipliers on Ch\'{e}bli-Trim\`{e}che hypergroups by using the techniques developed by Anker  in his seminal paper \cite{Anker}. In particular, they extended the result of Stempak \cite{Stempak} proved for Bessel-Kingman hypergroups, a particular class of Ch\'{e}bli-Trim\`{e}che hypergroups of polynomial growth. After that, Gosselin and Stempak \cite{GStempak} proved an improved version of result in \cite{Stempak} using similar method used by H$\ddot{\text{o}}$rmander in his classical paper \cite{Hormander1960} for Bessel-Kingman hypergroups, which was later studied by several researcher in many different settings, e.g. \cite{Soltani, DPW, DHejna, MOmri, CBO}. Here we deal with $L^p$-$L^q$ multipliers as opposed to the $L^p$-multipliers for which theorems of Mihlin-H$\ddot{\text{o}}$rmander or Marcinkiewicz type provide results for Fourier multipliers in different settings based on the regularity of the symbol. In the Lie group setting, Mihlin-H$\ddot{\text{o}}$rmander multiplier theorem was studied by many researchers. We cite here \cite{ Hormander1960, Anker, Anker92, Anker96, Cow3, Cow1, Cow2, Cow4, Ruzwirth,Del, Astengo} to mention a few of them. 
	
	     The Paley-type inequality describes the growth of the Fourier transform of a function in terms of its $L^p$-norm. Interpolating the Paley-inequality with the Hausdorff-Young inequality one can obtain the following H\"ormander's version of the  Hausdorff-Young-Paley inequality,
\begin{equation}\label{3}
    \left(\int\limits_{\mathbb{R}^n}|(\mathcal{F}_{\R^n}f)(\xi)\phi(\xi)^{ \frac{1}{r}-\frac{1}{p'} }|^r\, d\xi \right)^{\frac{1}{r}}\leq \Vert f \Vert_{L^p(\mathbb{R}^n)},\,\,\,1<p\leq r\leq p'<\infty, \,\,1<p<2.
\end{equation} Also, as a consequence  of the Hausdorff-Young-Paley inequality, H\"ormander \cite[page 106]{Hormander1960} proved that the condition 
\begin{equation}\label{4}
    \sup_{t>0}t^b|\{\xi\in \mathbb{R}^n:m(\xi)\geq t\}|<\infty,\quad \frac{1}{p}-\frac{1}{q}=\frac{1}{b},
\end{equation}where $1<p\leq 2\leq q<\infty,$ implies the existence of a bounded extension of a Fourier multiplier $T_{m}$ with symbol $m$ from $L^p(\mathbb{R}^n)$ to $L^q(\mathbb{R}^n).$ Recently, the second author with R. Akylzhanov  studied  classical H\"ormander results for unimodular locally compact groups and homogeneous spaces \cite{ARN, AR}. In \cite{AR}, the key idea behind the extension of H\"ormander theorem is the reformulation of this theorem as follows: 
$$\|T_m\|_{L^p(\mathbb{R}^n) \rightarrow L^q(\mathbb{R}^n)} \lesssim \sup_{s>0} s\left( \int_{ \{\xi \in \mathbb{R}^n :\, m(\xi) \geq s\} } d\xi \right)^{\frac{1}{p}-\frac{1}{q}} \simeq \|m\|_{L^{r, \infty}(\mathbb{R}^n)} \simeq \|T_m\|_{L^{r, \infty}(\textnormal{VN}(\mathbb{R}^n))},$$ where $\frac{1}{r}=\frac{1}{p}-\frac{1}{q},$ $\|m\|_{L^{r, \infty}(\mathbb{R}^n)}$ is the Lorentz norm of $m,$ and $\|T_m\|_{L^{r, \infty}(\textnormal{VN}(\mathbb{R}^n))}$ is the norm of the operator $T_m$ in the Lorentz space on the group von Neumann algebra $\textnormal{VN}(\mathbb{R}^n)$ of $\mathbb{R}^n.$ 
They use the Lorentz spaces and group von Neumann algebra technique for extending it to general locally compact unimodular groups. The unimodularity assumption has its own advantages such as existence of the canonical trace on the group von Neumann algebra and consequently,  Plancherel formula and the Hausdorff-Young inequality. 

In this paper, we prove Paley-type inequality, Hausdorff-Young-Paley inequality and H\"ormander multiplier theorem for commutative hypergroups.  We refer to Section \ref{Jacobinota} for more details and all the notation used here.
The following theorem is an analogue of Paley inequality for commutative hypergroups.

\begin{thm}
	      Suppose that $\psi$ is a positive function  on $\widehat{H}$  satisfying the condition 
	    \begin{equation}
	        M_\psi := \sup_{t>0} t \int_{\underset{\psi(\chi)\geq t}{\chi \in \widehat{H}}} d\pi(\chi) <\infty.
	        \end{equation}
	        Then for  $f \in L^p(H, d\lambda),$ $1<p\leq 2,$ we have 
	        \begin{align}
	            \left( \int_{\widehat{H}} |\widehat{f}(\chi)|^p\, \psi(\chi)^{2-p} d\pi(\chi) \right)^{\frac{1}{p}} \lesssim M_{\psi}^{\frac{2-p}{p}}\, \|f\|_{L^p(H, d\lambda)}.
	        \end{align}
	\end{thm}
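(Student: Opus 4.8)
The plan is to follow the classical Marcinkiewicz-type interpolation argument that produces Paley inequalities, adapted to the hypergroup setting. The key point is to interpret the left-hand side as the $L^p$-norm of the Fourier transform against the measure $d\nu(\chi) := \psi(\chi)^2\, d\pi(\chi)$ on $\widehat{H}$, and then to show that the sublinear operator $Af(\chi) := \widehat{f}(\chi)\,\psi(\chi)$ is of weak type $(1,1)$ and of strong type $(2,2)$ with respect to this weighted measure, with operator norms controlled by $M_\psi$ and $1$ respectively. Once this is set up, the Marcinkiewicz interpolation theorem (for the pair of measure spaces $(H,d\lambda)$ and $(\widehat H,d\nu)$) gives that $A$ is of strong type $(p,p)$ for $1<p<2$, with norm bounded by a constant times $M_\psi^{1/p - 1/2}$; unravelling the definitions yields exactly the claimed inequality with the exponent $\frac{2-p}{p} = \frac1p - \frac12$ (up to the factor $2$, since $\|Af\|_{L^p(d\nu)}^p = \int |\widehat f|^p \psi^{p}\psi^{2-p}\,d\pi$ — one checks the bookkeeping gives $\psi^{2-p}$ as stated).

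The strong type $(2,2)$ estimate is immediate from the Plancherel theorem for commutative hypergroups: $\|Af\|_{L^2(d\nu)}^2 = \int_{\widehat H} |\widehat f(\chi)|^2 \psi(\chi)^2 \,\psi(\chi)^{-2}\, d\pi(\chi)$ — wait, more carefully, with the normalization chosen so that $Af = \widehat f \cdot \psi$ and $d\nu = \psi^{-2} d\pi$ would be wrong; instead one takes $d\nu = d\pi$ and $Af = \widehat f$, which is an isometry $L^2(H,d\lambda)\to L^2(\widehat H, d\pi)$ by Plancherel. The weight is introduced only through the measure in the interpolation. So the correct formulation: let $d\nu(\chi) = \psi(\chi)^{2}\,d\pi(\chi)$, and consider $A f(\chi) = \widehat f(\chi)/\psi(\chi)$. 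Then $\|Af\|_{L^2(d\nu)} = \|\widehat f\|_{L^2(d\pi)} = \|f\|_{L^2(d\lambda)}$, giving strong type $(2,2)$ with constant $1$. The heart of the matter is the weak type $(1,1)$ bound: for $f\in L^1(H,d\lambda)$ one has $|\widehat f(\chi)| \le \|f\|_{L^1}$ for all $\chi$ (since characters are bounded by $1$ on a hypergroup), so
\[
\nu\{\chi : |Af(\chi)| > s\} = \nu\Bigl\{\chi : \psi(\chi) < |\widehat f(\chi)|/s \le \|f\|_{L^1}/s\Bigr\} \le \nu\{\chi : \psi(\chi) < \|f\|_{L^1}/s\}.
\]
It then remains to estimate $\nu\{\psi < c\} = \int_{\psi(\chi)<c} \psi(\chi)^2\, d\pi(\chi)$ by $c\cdot M_\psi$ — roughly $\lesssim c M_\psi$ — using the hypothesis $M_\psi = \sup_{t>0} t\,\pi\{\psi \ge t\} < \infty$. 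This is a standard layer-cake computation: write $\int_{\psi<c}\psi^2\,d\pi = \int_0^\infty 2t\, \pi\{t \le \psi < c\}\,dt$ for $t<c$, bound $\pi\{\psi \ge t\} \le M_\psi/t$, and integrate, which produces a bound of the form $2 M_\psi c$. This yields weak type $(1,1)$ with constant $\simeq M_\psi$.

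I expect the main obstacle to be purely bookkeeping: getting the exponents and the power of $M_\psi$ to come out exactly as $M_\psi^{(2-p)/p}$ requires care about how the interpolation constant depends on the endpoint constants (the $(1,1)$-constant is $\simeq M_\psi$, the $(2,2)$-constant is $1$, and Marcinkiewicz interpolation with $\theta$ determined by $\frac1p = 1-\theta + \frac\theta2$, i.e. $\theta = 2 - 2/p$ wait $\theta = 2(1-1/p)$, gives a constant $\lesssim M_\psi^{1-\theta} = M_\psi^{2/p - 1} = M_\psi^{(2-p)/p}$). One must also make sure the Fourier transform on $L^p(H,d\lambda)$ for $1<p<2$ is well-defined and agrees with the interpolated operator, which follows from density of $L^1\cap L^2$ and the Hausdorff–Young inequality on commutative hypergroups; these facts should be available from the preliminaries in Section \ref{Jacobinota}. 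The only genuinely hypergroup-specific inputs are the boundedness $\|\chi\|_\infty \le 1$ of characters (so that $|\widehat f| \le \|f\|_1$) and the Plancherel theorem, both standard in commutative hypergroup harmonic analysis.
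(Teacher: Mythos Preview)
Your proposal is correct and, once you settle on the normalization $d\nu = \psi^2\,d\pi$ with $Af = \widehat f/\psi$, it is essentially identical to the paper's proof: the paper likewise establishes strong type $(2,2)$ via Plancherel, weak type $(1,1)$ via $|\widehat f|\le \|f\|_1$ together with the same layer-cake estimate $\int_{\psi\le w}\psi^2\,d\pi \lesssim M_\psi w$, and then applies Marcinkiewicz interpolation to obtain the constant $M_\psi^{(2-p)/p}$. The only cosmetic difference is that the paper takes $Tf = |\widehat f|/\psi$ as a sublinear operator from the outset, avoiding your initial back-and-forth on the choice of weight.
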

The following Hausdorff-Young-Paley inequality is shown for commutative hypergroups. 
\begin{thm} Let $1<p\leq 2,$ and let   $1<p \leq b \leq p' \leq \infty,$ where $p'= \frac{p}{p-1}.$ If $\psi(\chi)$ is a positive function on $\widehat{H}$ such that 
 \begin{equation}
	        M_\psi := \sup_{t>0} t \int_{\underset{\psi(\chi)\geq t}{\chi \in \widehat{H}}} d\pi(\chi)
	        \end{equation}
is finite then for every $f \in L^p(H, d\lambda)$ 
 we have
\begin{equation} 
    \left( \int_{\widehat{H}}  \left( |\widehat{f}(\chi)| \psi(\chi)^{\frac{1}{b}-\frac{1}{p'}} \right)^b d\pi(\chi)  \right)^{\frac{1}{b}} \lesssim M_\varphi^{\frac{1}{b}-\frac{1}{p'}} \|f\|_{L^p(H, d\lambda)}.
\end{equation}
\end{thm}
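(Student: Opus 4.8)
The plan is to obtain this inequality by interpolating the Paley inequality stated in the previous theorem with the Hausdorff--Young inequality on the commutative hypergroup $H$, using the Stein--Weiss interpolation theorem with change of measure (see, e.g., Bergh--L\"ofstr\"om, \emph{Interpolation Spaces}, Theorem 5.5.1); this is the hypergroup transcription of the Euclidean argument behind \eqref{3}.

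First I would record the two endpoint estimates for the single linear operator $\mathcal{F}_H\colon f\mapsto\widehat f$, both with domain $L^p(H,d\lambda)$ for the fixed exponent $p\in(1,2]$. The first endpoint is the Paley inequality itself: since $2-p=p\left(\frac1p-\frac1{p'}\right)$, the previous theorem asserts precisely that $\mathcal F_H$ maps $L^p(H,d\lambda)$ boundedly into $L^p(\widehat H,\psi^{2-p}\,d\pi)$ with norm $\lesssim M_\psi^{(2-p)/p}$. The second endpoint is the Hausdorff--Young inequality $\|\widehat f\|_{L^{p'}(\widehat H,d\pi)}\le\|f\|_{L^p(H,d\lambda)}$, which follows from Riesz--Thorin interpolation between the Plancherel identity $\|\widehat f\|_{L^2(\widehat H,d\pi)}=\|f\|_{L^2(H,d\lambda)}$ and the trivial bound $\|\widehat f\|_{L^\infty(\widehat H,d\pi)}\le\|f\|_{L^1(H,d\lambda)}$ (using $\|\chi\|_\infty\le1$ for all $\chi\in\widehat H$); here the relevant measure is $d\pi$ and the relevant constant is $1$.

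Next I would apply the Stein--Weiss theorem to these two endpoints. Choosing $\theta\in[0,1]$ by $\frac1b=\frac{1-\theta}{p}+\frac{\theta}{p'}$ --- possible exactly because $p\le b\le p'$ --- the interpolated measure is
\[
d\mu_\theta=\left(\psi^{2-p}\,d\pi\right)^{\frac{b(1-\theta)}{p}}\left(d\pi\right)^{\frac{b\theta}{p'}}=\psi^{(2-p)\frac{b(1-\theta)}{p}}\,d\pi,
\]
since $\frac{b(1-\theta)}{p}+\frac{b\theta}{p'}=1$, while the interpolated constant is $\lesssim M_\psi^{(2-p)(1-\theta)/p}$. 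A short computation using $p'=p/(p-1)$ gives $(2-p)\frac{1-\theta}{p}=\frac1b-\frac1{p'}$, hence the exponent of $\psi$ equals $(2-p)\frac{b(1-\theta)}{p}=1-\frac{b}{p'}=b\left(\frac1b-\frac1{p'}\right)$ and the exponent of $M_\psi$ equals $\frac1b-\frac1{p'}$. Substituting these back yields
\[
\left(\int_{\widehat H}\bigl(|\widehat f(\chi)|\,\psi(\chi)^{\frac1b-\frac1{p'}}\bigr)^b\,d\pi(\chi)\right)^{1/b}\lesssim M_\psi^{\frac1b-\frac1{p'}}\|f\|_{L^p(H,d\lambda)},
\]
which is the claim. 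The degenerate cases are consistent: $b=p'$ (so $\theta=1$) is just Hausdorff--Young, and $b=p$ (so $\theta=0$) is the Paley inequality.

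I expect the only points needing care to be purely bookkeeping: verifying that the Stein--Weiss theorem applies in this setting (the measures $d\pi$ and $\psi^{2-p}\,d\pi$ are $\sigma$-finite and $\mathcal F_H$ is a well-defined linear map on $L^p\cap L^2$, which suffices to run the complex-interpolation argument and then extend by density), and matching the powers of $\psi$ and of $M_\psi$ against $\frac1b-\frac1{p'}$. Neither is a genuine obstacle.
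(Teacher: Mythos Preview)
Your proposal is correct and follows essentially the same route as the paper: both interpolate the Paley inequality (endpoint $L^p\to L^p(\widehat H,\psi^{2-p}d\pi)$) against Hausdorff--Young (endpoint $L^p\to L^{p'}(\widehat H,d\pi)$) via the Stein--Weiss/Bergh--L\"ofstr\"om interpolation with change of measure, with the same choice of $\theta$ determined by $\tfrac1b=\tfrac{1-\theta}{p}+\tfrac{\theta}{p'}$. Your write-up is in fact slightly more explicit than the paper's in tracking the interpolated constant $M_\psi^{1/b-1/p'}$ and in verifying the exponent identity $(2-p)\tfrac{1-\theta}{p}=\tfrac1b-\tfrac1{p'}$.
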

Next, we establish the following $L^p$-$L^q$ boundedness result for Fourier multipliers on commutative hypergroups.
\begin{thm}   Let $1<p \leq 2 \leq q<\infty$ and let $H$ be a commutative  hypergroup. Suppose that $T$ is a  Fourier multiplier with symbol $h.$ Then we have 
$$\|T\|_{L^p(H,\, d\lambda) \rightarrow L^q(H,\, d\lambda)}\lesssim \sup_{s>0} s \left[ \int_{\{ \chi \in \widehat{H}: |h(\chi)|\geq s\}} d\pi(\chi) \right]^{\frac{1}{p}-\frac{1}{q}}.$$
   \end{thm}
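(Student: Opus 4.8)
The plan is to deduce this $L^p$-$L^q$ multiplier theorem from the Hausdorff-Young-Paley inequality (the preceding theorem) by the now-standard interpolation-plus-duality argument of Akylzhanov--Ruzhansky, adapted to the hypergroup Plancherel setting. First I would reduce to the case $\frac{1}{p}-\frac{1}{q}<1$, i.e. assume the right-hand side is finite, and set $\frac{1}{r}=\frac{1}{p}-\frac{1}{q}$ so that $r\in(1,\infty)$; the assumption on $h$ says precisely that $h$ lies in the Lorentz space $L^{r,\infty}(\widehat H,d\pi)$ with $\|h\|_{L^{r,\infty}}\simeq \sup_{s>0}s\,\pi(\{|h|\ge s\})^{1/r}$. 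The goal is then $\|T\|_{L^p\to L^q}\lesssim \|h\|_{L^{r,\infty}(\widehat H)}$.

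The core step is to establish, for every $1<p\le 2\le q<\infty$ with $\frac1p-\frac1q=\frac1r$, the bound
\begin{equation*}
\|T\|_{L^p(H,d\lambda)\to L^q(H,d\lambda)}\lesssim \sup_{\kappa>0}\kappa\left(\int_{\{\chi\in\widehat H:\,|h(\chi)|\ge\kappa\}}d\pi(\chi)\right)^{\frac1p-\frac1q}.
\end{equation*}
By the Hausdorff-Young inequality on the commutative hypergroup $H$ (Plancherel theory: $\mathcal F\colon L^{q'}(H,d\lambda)\to L^q(\widehat H,d\pi)$ for $1<q'\le 2$, equivalently $\widehat{\cdot}\colon L^p\to L^{p'}$), it suffices to show
\begin{equation*}
\|\widehat{Tf}\|_{L^q(\widehat H,d\pi)}=\||h|\,\widehat f\|_{L^q(\widehat H,d\pi)}\lesssim \|h\|_{L^{r,\infty}(\widehat H)}\,\|f\|_{L^p(H,d\lambda)}.
\end{equation*}
Writing $\frac1q=\frac1b-\frac1{p'}$ for an appropriate exponent $b$ with $1<p\le b\le p'$, I would apply the Hausdorff-Young-Paley inequality of the previous theorem with the weight $\psi$ chosen so that $\psi^{1/b-1/p'}=\psi^{1/q}$ matches the desired power of $|h|$. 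Concretely, following Akylzhanov--Ruzhansky, one proves first the endpoint-type estimate $\||h|\,\widehat f\|_{L^{q}}\lesssim \|h\|_{L^{r,\infty}}\|f\|_{L^p}$ by taking $\psi(\chi)=|h(\chi)|^{r}$ (up to normalization), for which $M_\psi=\sup_{t>0}t\,\pi(\{|h|^r\ge t\})=\sup_{\kappa>0}\kappa^r\pi(\{|h|\ge\kappa\})=\|h\|_{L^{r,\infty}}^r<\infty$, so that the Paley/Hausdorff-Young-Paley inequality yields exactly the claimed bound after matching exponents. Since this must hold for all admissible $(p,q)$ with fixed $1/r=1/p-1/q$, one can either run the argument once for general $(p,q)$ or fix a convenient pair (e.g. $q=p'$, giving the Hausdorff-Young endpoint, and $b=p$) and then interpolate using Stein--Weiss / real interpolation between Lorentz spaces to cover the full range $1<p\le 2\le q<\infty$.

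The main obstacle I anticipate is bookkeeping of exponents and ensuring the hypotheses of the Hausdorff-Young-Paley theorem are met throughout the interpolation: one needs $1<p\le b\le p'\le\infty$ at each stage, and one must verify that substituting $\psi=|h|^{r}$ (suitably scaled) genuinely produces the power $\psi^{1/b-1/p'}$ equal to $|h|$ after the identification $\frac1b-\frac1{p'}=\frac1q$ and $\frac1p-\frac1q=\frac1r$; this is the place where the Akylzhanov--Ruzhansky computation is delicate. A secondary point requiring care is that the Plancherel measure $d\pi$ on $\widehat H$ need not be the "same kind" of object as $d\lambda$ on $H$ — but for commutative hypergroups the Levitan--Plancherel theorem provides $d\pi$ together with the validity of Hausdorff-Young, which is all that is used; no unimodularity-type subtlety beyond what the preceding two theorems already encode is needed. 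Once the exponent matching is in place, the Fourier multiplier bound follows immediately by composing $f\mapsto \widehat f\mapsto h\widehat f\mapsto \mathcal F^{-1}(h\widehat f)=Tf$ with Hausdorff-Young on both ends, so the proof is short modulo the interpolation lemma, which I would state separately (as a Lorentz-space interpolation statement of the type used in \cite{AR}) and invoke.
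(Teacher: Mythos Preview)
Your approach is the paper's approach---apply the Hausdorff--Young--Paley inequality with weight $\psi=|h|^{r}$, where $\tfrac1r=\tfrac1p-\tfrac1q$---but two points in your execution need correction.

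First, the exponent on the Fourier side is wrong. The dual Hausdorff--Young inequality gives $\|Tf\|_{L^q(H)}\le \|\widehat{Tf}\|_{L^{q'}(\widehat H)}$ for $q\ge 2$, so the quantity you must control is $\|h\widehat f\|_{L^{q'}(\widehat H)}$, not $\|h\widehat f\|_{L^{q}(\widehat H)}$. Correspondingly, in the Hausdorff--Young--Paley theorem you should take $b=q'$ (not the relation $\tfrac1q=\tfrac1b-\tfrac1{p'}$ you wrote). With $b=q'$ one has $\tfrac1b-\tfrac1{p'}=\tfrac1{q'}-\tfrac1{p'}=\tfrac1p-\tfrac1q=\tfrac1r$, so $\psi^{1/b-1/p'}=|h|^{r\cdot 1/r}=|h|$ and the left side of HYP becomes exactly $\|h\widehat f\|_{L^{q'}}$; the constant is $M_\psi^{1/r}=\|h\|_{L^{r,\infty}}$. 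No further exponent matching is needed.

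Second, the range condition $p\le b=q'\le p'$ in HYP forces $p\le q'$, which does not cover all $1<p\le 2\le q<\infty$. You propose handling the remaining range by real/Lorentz interpolation; the paper instead uses the simpler duality reduction $\|T\|_{L^p\to L^q}=\|T^*\|_{L^{q'}\to L^{p'}}$, noting that $T^*$ is the Fourier multiplier with symbol $\bar h$ and $|\bar h|=|h|$, so the case $q'<p$ reduces to the case already proved (with $(q',p')$ in place of $(p,q)$, which satisfies $q'\le p=(p')'$). This avoids any extra interpolation lemma.
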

   Now, we establish a spectral multiplier theorem for the operator $L,$ called {\it the generalised radial Laplacian}, given by 
   \begin{equation} \nonumber
	L=L_{A,x}:= -\frac{d^2}{dx^2}- \frac{A'(x)}{A(x)} \frac{d}{dx},
	\end{equation}
	where the function $A,$ is the {\it Ch\'{e}bli-Trim\`{e}che function}. Define $\rho:= \frac{1}{2} \lim\limits_{x \rightarrow \infty} \frac{A'(x)}{A(x)}$. It is known that this limit exists and is non-negative (see Section \ref{sec4.1} for more details and notation).  For some particular choices of  Ch\'{e}bli-Trim\`{e}che function, the operator $L$ arises as the radial part of Laplace-Beltrami operators on symmetric spaces of rank one and on the  Euclidean space. It has been recently observed in \cite{Biswas} that $L$ appeared as the radial part of Laplace-Beltrami operators on simply connected harmonic manifolds of purely exponential volume growth with $A$ as the density function on the harmonic manifold.
	\begin{thm} Let $L$ be the generalised radial Laplacian and $1<p \leq 2 \leq q <\infty.$ Suppose that $\varphi$ is a monotonically  decreasing continuous function on $[\rho^2, \infty)$ such that $\lim_{u \rightarrow \infty}\varphi(u)=0.$ Then we have 
\begin{equation} \nonumber
    \|\varphi(L)\|_{\textnormal{op}} \lesssim \sup_{u>\rho^2} \varphi(u)  \begin{cases} (u-\rho^2)^{(a+1)(\frac{1}{p}-\frac{1}{q})} & \quad \textnormal{if} \quad (u-\rho^2)^{\frac{1}{2}} \leq K, \\ \left[K^{2a+2}- K^{2\alpha+2}+(u-\rho^2)^{(\alpha+1)} \right]^{\frac{1}{p}-\frac{1}{q}} & \quad \textnormal{if} \quad (u-\rho^2)^{\frac{1}{2}}>K,  \end{cases}
\end{equation} where $K, \alpha$ and $a$ are constants appearing in the estimate of the $c$-function,  in the definition of Ch\'ebli-Trim\'eche function $A$ and in the Condition $(P)$ on $A$ respectively, and $\|\cdot\|_{\textnormal{op}}$ denotes the operator norm from $L^p(\mathbb{R}_+, A dx)$ to $L^q(\mathbb{R}_+, Adx).$ 
\end{thm}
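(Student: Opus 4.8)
The plan is to reduce the spectral multiplier bound for $\varphi(L)$ to the abstract $L^p$–$L^q$ Fourier multiplier theorem on the Chébli–Trimèche hypergroup $H = (\mathbb{R}_+, A\,dx)$, and then to estimate the Plancherel measure of the relevant level sets explicitly. First I would recall that on a Chébli–Trimèche hypergroup the Fourier transform is the Jacobi-type transform associated to $A$, the dual $\widehat{H}$ is parametrised by $[\rho^2,\infty)$ via $\lambda \mapsto \rho^2 + \lambda^2$ (or by $[0,\infty)$ in the $\lambda$-variable together with the continuous spectrum), and the Plancherel measure is $d\pi(\lambda) = c_A\,|c(\lambda)|^{-2}\,d\lambda$ where $c(\lambda)$ is the Harish-Chandra-type $c$-function. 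Since $L$ is the generator whose spherical functions diagonalise this transform, $\varphi(L)$ is exactly the Fourier multiplier with symbol $h(\chi_\lambda) = \varphi(\rho^2+\lambda^2)$. Applying the previous theorem (the $L^p$–$L^q$ multiplier theorem for commutative hypergroups) gives
\begin{equation} \nonumber
    \|\varphi(L)\|_{\textnormal{op}} \lesssim \sup_{s>0} s \left[ \pi\big(\{\lambda : |\varphi(\rho^2+\lambda^2)| \geq s\}\big) \right]^{\frac{1}{p}-\frac{1}{q}}.
\end{equation}

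Next I would exploit the hypotheses on $\varphi$: monotone decreasing and continuous on $[\rho^2,\infty)$ with limit $0$ at infinity. Because $u \mapsto \varphi(u)$ is decreasing, the set $\{\lambda \geq 0 : \varphi(\rho^2+\lambda^2)\geq s\}$ is an interval $[0, R(s)]$ where $R(s)$ is determined by $\varphi(\rho^2 + R(s)^2) = s$ (using continuity to invert). Hence $s = \varphi(\rho^2+R^2)$ and the supremum over $s>0$ becomes a supremum over $R>0$, i.e. over $u = \rho^2 + R^2 > \rho^2$:
\begin{equation} \nonumber
    \|\varphi(L)\|_{\textnormal{op}} \lesssim \sup_{u > \rho^2} \varphi(u)\, \big[\pi\big(\{\lambda : \lambda^2 \leq u - \rho^2\}\big)\big]^{\frac{1}{p}-\frac{1}{q}}.
\end{equation}
So the whole problem collapses to estimating the Plancherel measure of a ball $\{0 \le \lambda \le (u-\rho^2)^{1/2}\}$, i.e. $\int_0^{(u-\rho^2)^{1/2}} |c(\lambda)|^{-2}\,d\lambda$, and showing it is comparable to the bracketed quantity in the statement.

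This last estimate is the technical heart and the main obstacle. I would invoke the known two-sided estimates on $|c(\lambda)|^{-2}$ for Chébli–Trimèche hypergroups (as in Bloom–Xu \cite{BX99} and Anker \cite{Anker}): for small $\lambda$ one has $|c(\lambda)|^{-2} \asymp \lambda^{2a+1}$ where $a$ is the exponent in Condition $(P)$ governing the local behaviour of $A$ near $0$ (so $A(x) \asymp x^{2a+1}$ near $0$), while for large $\lambda$ one has $|c(\lambda)|^{-2} \asymp \lambda^{2\alpha+1}$ with $\alpha$ the exponent from the definition of the Chébli–Trimèche function $A$ at infinity, and $K$ is the threshold separating the two regimes. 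Integrating: if $(u-\rho^2)^{1/2}\le K$, then $\int_0^{(u-\rho^2)^{1/2}}\lambda^{2a+1}\,d\lambda \asymp (u-\rho^2)^{a+1}$, giving the first branch after raising to the power $\frac1p-\frac1q$; if $(u-\rho^2)^{1/2}>K$, split the integral at $K$, getting $\int_0^K \lambda^{2a+1}\,d\lambda + \int_K^{(u-\rho^2)^{1/2}} \lambda^{2\alpha+1}\,d\lambda \asymp K^{2a+2} + \big[(u-\rho^2)^{\alpha+1} - K^{2\alpha+2}\big]$, which is the bracket in the second branch. The subtlety to be careful about is matching conventions for the normalisation of $d\pi$ and for the parameters $a,\alpha,K$ exactly as they are introduced in Section \ref{sec4.1}, and confirming that the $c$-function bounds hold uniformly in the form stated (including that $\rho>0$ versus $\rho=0$ does not change the leading behaviour near $\lambda=0$, only the parametrisation $u = \rho^2+\lambda^2$); once those bookkeeping points are settled, the inequality follows by combining the displayed multiplier bound with the ball-volume estimate.
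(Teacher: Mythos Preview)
Your proposal is correct and follows essentially the same route as the paper's own proof: recognise $\varphi(L)$ as the Fourier multiplier with symbol $\varphi(\lambda^2+\rho^2)$, apply the $L^p$--$L^q$ multiplier theorem (Theorem~\ref{Jacobimult}/\ref{pqF}), use the monotonicity of $\varphi$ to rewrite the level set as $\{\lambda\le (u-\rho^2)^{1/2}\}$ via the substitution $s=\varphi(u)$, and then integrate the two-regime $c$-function bounds of Theorem~\ref{bxthe} over $[0,K]$ and $[K,(u-\rho^2)^{1/2}]$. One small correction in your narrative (which does not affect the argument): you have the descriptions of $a$ and $\alpha$ reversed---in the paper $\alpha$ is the exponent governing $A$ near $0$ (condition (iv) of the Ch\'ebli--Trim\`eche function) and $a$ comes from Condition~(P), which concerns large $x$---but since you correctly match $|c(\lambda)|^{-2}\asymp\lambda^{2a+1}$ for $|\lambda|\le K$ and $|c(\lambda)|^{-2}\asymp\lambda^{2\alpha+1}$ for $|\lambda|>K$, the computation goes through unchanged.
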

The precise values of $K,$ $\alpha$ and $a$ and their appearance in the context of analysis on Ch\'ebli-Trim\'eche hypergroups will be given in Section \ref{sec4.1}.

The above spectral multipliers theorem leads to very interesting results concerning the  $L^p$-$L^q$ norm of the heat kernel of $L$ and embedding theorems for $L,$ by a particular choice of  the function $\varphi.$
\begin{cor} Let $L$ be the generalised radial Laplacian. For any $1<p \leq 2 \leq q <\infty$ there exists a positive constant $C=C_{\alpha, a, p, q, K}$ such that 
 \begin{align*} 
        \|e^{-tL}\|_{\textnormal{op}} \lesssim \begin{cases} t^{-2(\alpha+1)(\frac{1}{p}-\frac{1}{q})} \quad & \text{if}\quad 0<t < \frac{\alpha+1}{K}\left(\frac{1}{p}-\frac{1}{q} \right) \\e^{-t \rho^2}\,\, e^{-\frac{(a+1)^2}{t}\left(\frac{1}{p}-\frac{1}{q}\right)^2} t^{-2(a+1)(\frac{1}{p}-\frac{1}{q})} \quad & \text{if}\quad t \geq  \frac{a+1}{K}\left(\frac{1}{p}-\frac{1}{q} \right),\end{cases}
    \end{align*} where $\|\cdot\|_{\textnormal{op}} $ denotes the operator norm from $L^p(\mathbb{R}_+, A dx)$ to $L^q(\mathbb{R}_+, Adx).$
    
    We also have the embeddings
    $$\|f\|_{L^q(\R_+, A dx)} \leq C \|(1+L)^b f\|_{L^q(\R_+, A dx)}$$ provided that $b \geq (\alpha+1) \left( \frac{1}{p}-\frac{1}{q} \right),\,\,1<p\leq 2 \leq q<\infty.$
    \end{cor}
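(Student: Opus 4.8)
The plan is to obtain both assertions directly from the spectral multiplier theorem stated above by inserting a suitable $\varphi$: take $\varphi(u)=e^{-tu}$ for the heat semigroup bound and $\varphi(u)=(1+u)^{-b}$ for the embedding, and then optimise the resulting supremum over the spectrum of $L$. Both functions are continuous, strictly decreasing on $[\rho^2,\infty)$ and tend to $0$ at infinity, so $e^{-tL}$ and $(1+L)^{-b}$ are exactly operators of the form $\varphi(L)$ covered by that theorem.

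For the heat semigroup, the spectral multiplier theorem yields
\[
\|e^{-tL}\|_{\textnormal{op}}\lesssim \sup_{u>\rho^2} e^{-tu}\,W(u),
\]
where $W(u)$ denotes the case-defined weight appearing in that theorem. Substituting $v=u-\rho^2\ge 0$ and factoring out $e^{-t\rho^2}$, it remains to estimate $\sup_{v>0}e^{-tv}\widetilde W(v)$, where $\widetilde W(v)=v^{(a+1)(\frac1p-\frac1q)}$ for $\sqrt v\le K$ and $\widetilde W(v)=[K^{2a+2}-K^{2\alpha+2}+v^{\alpha+1}]^{\frac1p-\frac1q}$ for $\sqrt v>K$. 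I would split this supremum over the two ranges $v\le K^2$ and $v>K^2$. On each range the function to be maximised equals $e^{-tv}$ times a fixed power $v^m$ of $v$ (up to the bounded additive constant $K^{2a+2}-K^{2\alpha+2}$ on the second range), and the elementary identity $\sup_{v>0}e^{-tv}v^m=(m/e)^m t^{-m}$, attained at $v_\ast=m/t$, together with the monotonicity of $e^{-tv}v^m$ on $[0,K^2]$ and on $[K^2,\infty)$, resolves each range. The critical point $v_\ast=m/t$ lies in the first range for $t$ large and in the second for $t$ small; tracking which range controls the supremum in each regime — the exponent-$\alpha$ branch for small $t$, where the factor $e^{-t\rho^2}$ is harmless, and the exponent-$a$ branch together with the prefactor $e^{-t\rho^2}$ for large $t$ — and collecting the constants yields the claimed two-regime bound.

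For the embedding, $\varphi(u)=(1+u)^{-b}$ again satisfies the hypotheses of the spectral multiplier theorem for any $b>0$, so $(1+L)^{-b}$ is bounded from $L^p(\mathbb{R}_+,A\,dx)$ to $L^q(\mathbb{R}_+,A\,dx)$ with
\[
\|(1+L)^{-b}\|_{\textnormal{op}}\lesssim \sup_{u>\rho^2}(1+u)^{-b}\,W(u).
\]
Since $W(u)$ is continuous on $[\rho^2,\infty)$ and behaves like $(u-\rho^2)^{(\alpha+1)(\frac1p-\frac1q)}\asymp u^{(\alpha+1)(\frac1p-\frac1q)}$ as $u\to\infty$, this supremum is finite precisely when $b\ge(\alpha+1)(\frac1p-\frac1q)$, the bounded range of $u$ near $\rho^2$ contributing nothing new. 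Writing $f=(1+L)^{-b}\big((1+L)^b f\big)$ and applying this boundedness then gives the asserted Sobolev-type embedding.

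The only genuine work is the case analysis in the optimisation for the heat semigroup: one must keep track of the (possibly negative) constant $K^{2a+2}-K^{2\alpha+2}$ coming from the estimate of the $c$-function, identify the threshold in $t$ separating the two regimes, and check compatibility of the two branches of $\widetilde W$ at $\sqrt v=K$. This is elementary calculus, but it has to be carried out carefully to land precisely on the stated form of the bound; the embedding part is then immediate.
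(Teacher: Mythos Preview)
Your proposal is correct and follows essentially the same approach as the paper: both parts are obtained by plugging $\varphi(u)=e^{-tu}$ and $\varphi(u)=(1+u)^{-b}$ into the spectral multiplier theorem, factoring out $e^{-t\rho^2}$ via the substitution $v=u-\rho^2$ (the paper uses the equivalent $s=\sqrt{u-\rho^2}$), and then locating the maximiser of $e^{-tv}v^m$ on each branch to separate the small-$t$ and large-$t$ regimes. The only cosmetic difference is that you record the embedding via $f=(1+L)^{-b}\big((1+L)^b f\big)$ explicitly, which the paper leaves implicit.
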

    Next, we will present applications of our main results in the context of well-posedness of nonlinear abstract Cauchy problems in the space $L^\infty(0, T, L^2(\mathbb{R}_+, A dx)).$ First, we consider the heat equation 
 \begin{equation} \label{Heatinto} 
     u_t-|Bu(t)|^p=0,\quad u(0)=u_0,
 \end{equation}
 where $B$ is a Fourier multiplier from $L^2(\mathbb{R}_+, A dx)$ into $L^{2p}(\mathbb{R}_+, A dx)$ for $1<p<\infty.$ We study local well-posedness of the above heat equation \eqref{Heatinto}. Secondly, we consider  
 the initial value problem  for the nonlinear wave equation  
\begin{align}\label{E-WNLEint}
u_{tt}(t) - b(t)|Bu(t)|^{p} = 0,
\end{align} with the initial condition $
u(0)=u_0, \,\,\, u_t(0)=u_1, $
where $b$ is a positive bounded function depending only on time, $B$ is a Fourier multiplier from $L^2(\mathbb{R}_+, A dx)$ into $L^{2p}(\mathbb{R}_+, A dx)$ for $1<p<\infty.$ We explore the global and local well-posedness of \eqref{E-WNLEint} under some conditions of function $b.$

We organise the paper as follows: in next section we present basics of Fourier analysis on commutative hypergroups. In Section \ref{sec3} will be devoted to the study of $L^p$-$L^q$ Fourier multipliers on commutative hypergroups along with the proof of the key inequalities needed to establish our main results. In Section \ref{sec4}, we analyse of results of previous section in the setting of Ch\'ebli-Trim\'eche hypergroups and then use it to obtained main result of this section on $L^p$-$L^q$ boundedness of spectral multipliers of generalised radial Laplacian and its important consequences. In the last section, we discuss the applications of our result to the well-posedness of nonlinear abstract Chauchy problems.   

Throughout the paper, we shall use the notation $A \lesssim B$ to indicate $A\leq cB $ for a suitable constant $c >0$. 

	\section{Fourier analysis on commutative hypergroups} \label{Jacobinota}
	In this section, we present essentials of the Fourier analysis on commutative hypergroups. 
	 We begin this section with the definition of a hypergroup. In \cite{Jewett}, Jewett refers to hypergroups as convos.
	\begin{defn} A {\it hypergroup} is a non empty locally compact Hausdorff space  $H$  with a weakly continuous, associative convolution $*$ on the Banach space $M(H)$ of all bounded regular Borel measures on $H$ such that $(M(H), *)$ becomes a Banach algebra and the following properties hold: 
		\begin{enumerate}
			\item[(i)] For any $x,y \in H,$ the convolution $\delta_x*\delta_y$ is a probability measure with compact support, where $\delta_x$ is the point mass measure at $x.$ Also, the mapping $(x,y)\mapsto \spt(\delta_x*\delta_y)$ is continuous from $K\times K$ to the space  $\mathcal{C}(H)$ of all nonempty compact subsets of $H$ equipped with the Michael (Vietoris) topology (see \cite{mi} for details).  
			\item[(ii)]  There exists a unique element $e \in H $ such that $\delta_x*\delta_e=\delta_e*\delta_x=\delta_x$ for every $x\in H.$
			\item[(iii)] There is a homeomophism $x \mapsto \check{x}$ on $H$ of order two which induces an involution on $M(H)$ where  $\check{\mu}(E)= \mu (\check{E})$ for any Borel set $E,$ and  $e \in \spt(\delta_x*\delta_y)$ if and only if $x = \check{y}.$ 
		\end{enumerate}
	\end{defn}
	Note that the weak continuity assures that the convolution of bounded measures on a hypergroup is uniquely determined by the convolution of point measures. A hypergroup is called  a commutative hypergroup if the convolution is commutative. A hypergroup $H$ is called {\it hermitian} if the involution on $H$ is the identity map, i.e., $\check{x}=x$ for all $x \in H.$ 
	Note that a hermitian hypergroup is commutative. Every locally compact abelian group is  a trivial example of a commutative hypergroup. Other important examples include Gelfand pairs, Bessel-Kingman hypergroups, Jacobi hypergroups and Ch\'{e}bli-Trim\`{e}che hypergroups.
	
	A {\it left Haar measure} $\lambda$ on $H$ is a non-zero positive Radon measure $\lambda$ such that 
	$$\int_H f(x*y) d\lambda(y)=\int_H f(y)\, d\lambda(y)\quad (\forall x \in H, \, f \in C_c(H)),$$ where we used the notation $f(x*y)=(\delta_x*\delta_y)(f)$. It is well known that a Haar measure is unique (up to scalar multiple) if it exists \cite{Jewett}.  Throughout this article, a left Haar measure is simply called a Haar measure. 
	We would like to make a remark here that it is still not known if a general hypergroup has a Haar measure but several important classes of hypergroups including commutative hypergroups, compact hypergroups, discrete hypergroups, nilpotent hypergroups possess a Haar measure, see  \cite{Jewett, Dunkl, Bloom, Willson, Amini, KKA, KKAadd, KumarRamsey} for more detail on harmonic analysis on hypergroups and its applications.  
	
	Denote the space of all bounded continuous complex valued functions on $H$ by $C^b(H).$ The dual space of a commutative hypergroup $H$ is defined by $$\widehat{H}=\{\chi\in C^b(H):\chi\neq 0,\, \chi(\check{x})=\overline{\chi(x)} \mbox{ and }\chi(x*y)=\chi(x)\chi(y)\ \forall\ x,y\in H\}.$$ The elements of $\widehat{H}$ are called {\it characters} of $H.$ We equip $\widehat{H}$ also with the compact-open topology so that $H$ becomes a locally compact Hausdorff space. Note that $\widehat{H}$ need not possess a hypergroup structure. For example, see \cite[Example 9.3C]{Jewett}. 
 
 The Fourier transform of $f\in L^1(H, \lambda)$ is defined by 
\begin{eqnarray} \label{f}
\widehat{f}(\chi)= \widehat{f}(\chi)=\int_H f(x)\overline{\chi(x)}\ d\lambda(x), \,\,\, \forall\ \chi\in\widehat{H}.
\end{eqnarray}   
  By \cite[Theorem 2.2.4]{Bloom}, the mapping $f \mapsto\widehat{f}$ is a norm-decreasing $*$-algebra homomorphism from $L^1(H, \lambda)$ into $L^\infty(\widehat{H}, \pi).$ Furthermore, $\widehat{f}$ vanishes at infinity. Also, as in the case of locally compact abelian groups, there exists a unique positive Borel measure $\pi$ on $\widehat{H}$ such that $$\int_H|f(x)|^2\ d\lambda(x)=\int_{\widehat{H}}|\widehat{f}(\chi)|^2\ d\pi(\chi) \,\,\, \forall\ f\in L^2(H, \lambda)\cap L^1(H, \lambda).$$ In fact, the Fourier transform extends to a unitary operator from $L^2(H, \lambda)$ onto $L^2(\widehat{H}, \pi).$  We would like to remark here that the support of $\pi,$ denoted $\mathcal{S},$ need not be equal to $\widehat{H}$ (see \cite[Example 2.2.49]{Bloom}). When $\mathcal{S}=\widehat{H},$  $H$ is called  a {\it strong hypergroup}.


	The following theorem is the   Hausdorff-Young inequality for commutative hypergroups \cite{Sinayoung}. 
	\begin{thm} \label{HY}
	Let $p, p'$ be such that  $1 \leq p \leq 2$ and $\frac{1}{p}+\frac{1}{p'}=1.$ Then for $f \in L^2(H,\, d\lambda)\cap L^1(H,\, d\lambda) $ we have the inequality 
	\begin{align*}
	    \|\widehat{f}\|_{L^{p'}(\widehat{H}, d\pi)} \leq \|f\|_{L^p(H,\, d\lambda)}.
	\end{align*}
	\end{thm}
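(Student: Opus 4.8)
The plan is to derive the inequality by complex interpolation, i.e.\ by the Riesz--Thorin interpolation theorem applied between the two endpoint cases $p=1$ and $p=2$, exactly as in the classical Euclidean setting.

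First I would record the two endpoint mapping properties, both of which are already available in the excerpt. For $p=2$ the asserted inequality is in fact an equality: it is the Plancherel identity $\|\widehat f\|_{L^2(\widehat H,\pi)}=\|f\|_{L^2(H,\lambda)}$ recalled above, valid since $f\mapsto\widehat f$ extends to a unitary operator from $L^2(H,\lambda)$ onto $L^2(\widehat H,\pi)$. For $p=1$ the claim is $\|\widehat f\|_{L^\infty(\widehat H,\pi)}\le\|f\|_{L^1(H,\lambda)}$; this is the norm-decreasing property of the Fourier transform from $L^1(H,\lambda)$ into $L^\infty(\widehat H,\pi)$ quoted from \cite[Theorem 2.2.4]{Bloom}. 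It also follows directly from the pointwise bound $|\widehat f(\chi)|\le\int_H|f(x)|\,|\chi(x)|\,d\lambda(x)$ together with the fact that every $\chi\in\widehat H$ satisfies $\|\chi\|_\infty\le 1$: indeed, for $x\in H$ one has $|\chi(x)|^2=\chi(x)\overline{\chi(x)}=\chi(x)\chi(\check x)=\chi(x*\check x)=\int_H\chi\,d(\delta_x*\delta_{\check x})$, and since $\delta_x*\delta_{\check x}$ is a probability measure this last integral is bounded by $\|\chi\|_\infty$, so $\|\chi\|_\infty^2\le\|\chi\|_\infty$, hence $\|\chi\|_\infty\le1$ (recall $\chi\neq 0$).

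Next I would interpolate. Apply the Riesz--Thorin theorem to the linear map $\mathcal F\colon f\mapsto\widehat f$ on the common domain $L^1(H,\lambda)\cap L^2(H,\lambda)$, which is contained in $L^p(H,\lambda)$ for every $1\le p\le 2$ by log-convexity of the $L^p$-norms. Choosing $\theta\in[0,1]$ with $\frac1p=\frac{1-\theta}{1}+\frac{\theta}{2}$, the conjugate index satisfies $\frac{1}{p'}=\frac{1-\theta}{\infty}+\frac{\theta}{2}=\frac{\theta}{2}$, so $\frac1p+\frac{1}{p'}=(1-\theta)+\theta=1$ as required, and $p$ ranges over all of $[1,2]$ as $\theta$ ranges over $[0,1]$. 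The interpolated operator bound is $1^{1-\theta}\cdot 1^{\theta}=1$, which yields $\|\widehat f\|_{L^{p'}(\widehat H,\pi)}\le\|f\|_{L^p(H,\lambda)}$ for every $f\in L^1(H,\lambda)\cap L^2(H,\lambda)$.

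The only place asking for mild care is that the Haar measure $\lambda$ and the Plancherel measure $\pi$ need not be $\sigma$-finite, whereas the most common formulation of Riesz--Thorin assumes $\sigma$-finiteness. This is handled by the standard reduction: for $f\in L^1(H,\lambda)\cap L^2(H,\lambda)$ the set $\{x:f(x)\neq0\}$ is $\sigma$-finite for $\lambda$ (a countable union of the finite-measure sets $\{|f|>1/n\}$), and $\{\chi:\widehat f(\chi)\neq0\}$ is $\sigma$-finite for $\pi$ since $\widehat f\in L^2(\widehat H,\pi)$; restricting to these $\sigma$-finite subspaces places us in the classical setting. Alternatively one may simply invoke a version of the Riesz--Thorin theorem valid for arbitrary measure spaces. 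I do not expect any genuine obstacle here: the argument is routine once the two endpoint estimates are in place, and the real content of the paper begins with the Paley-type inequalities that follow.
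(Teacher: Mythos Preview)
Your proposal is correct: the Riesz--Thorin interpolation between the $L^1\to L^\infty$ bound and the Plancherel isometry is the standard route to Hausdorff--Young, and both endpoint estimates are indeed available in the paper's preliminaries. Your aside on $\|\chi\|_\infty\le 1$ and the $\sigma$-finiteness reduction are accurate and handle the only points needing care.

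As for comparison: the paper does not actually prove Theorem~\ref{HY}. It is quoted as a known result from \cite{Sinayoung} (Degenfeld-Schonburg, \emph{Colloq.\ Math.}\ 2013) and used as a black box later in the Hausdorff--Young--Paley argument. So there is no in-paper proof to compare against; your interpolation argument is essentially the classical one and matches what one finds in the cited reference.
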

	\section{Fourier multipliers for commutative hypergroups} \label{sec3}
		
	\subsection{Paley  inequality for commutative hypergroups} In this subsection, we prove the Paley inequality for commutative  hypergroups. The Paley inequality has been proved for many different setting, e.g., compact homogeneoups spaces \cite{ARN,ARN1}, compact quantum groups \cite{AMR, Youn}, compact hypergroups \cite{KR} and locally compact groups \cite{AR}. 
	
	Paley inequality is an important inequality in itself but also plays a vital role in obtaining the  Hausdorff-Young-Paley inequality for commutative hypergroups. We follow the strategy of the proof of the Paley inequality in \cite{AR} in the non-commutative setting.

	\begin{thm}\label{Paley} Let $H$ be a commutative hypergroup equipped with a Haar measure $\lambda$ and let $\widehat{H}$ be the dual of $H$ equipped with measure $\pi.$
	      Suppose that $\psi$ is a positive function  on $\widehat{H}$  satisfying the condition 
	    \begin{equation}
	        M_\psi := \sup_{t>0} t \int_{\underset{\psi(\chi)\geq t}{\chi \in \widehat{H}}} d\pi(\chi) <\infty.
	        \end{equation}
	        Then for  $f \in L^p(H, d\lambda),$ $1<p\leq 2,$ we have 
	        \begin{align} \label{Paleyin}
	            \left( \int_{\widehat{H}} |\widehat{f}(\chi)|^p\, \psi(\chi)^{2-p} d\pi(\chi) \right)^{\frac{1}{p}} \lesssim M_{\psi}^{\frac{2-p}{p}}\, \|f\|_{L^p(H, d\lambda)}.
	        \end{align}
	\end{thm}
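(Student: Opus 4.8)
The plan is to follow the standard interpolation-with-change-of-measure argument (as in \cite{AR}), which reduces the Paley inequality to the endpoint case $p=1$ via Marcinkiewicz-type interpolation between Banach spaces equipped with different measures. First I would introduce the measure $\nu$ on $\widehat{H}$ defined by $\nu(E) = \int_E \psi(\chi)^2 \, d\pi(\chi)$, and consider the sublinear operator
\begin{equation} \nonumber
(Tf)(\chi) := \frac{\widehat{f}(\chi)}{\psi(\chi)}.
\end{equation}
The goal is to show that $T$ is bounded from $L^p(H, d\lambda)$ to $L^p(\widehat{H}, d\nu)$ for $1 < p \leq 2$, since
\begin{equation} \nonumber
\int_{\widehat{H}} |(Tf)(\chi)|^p \, d\nu(\chi) = \int_{\widehat{H}} |\widehat{f}(\chi)|^p \, \psi(\chi)^{2-p} \, d\pi(\chi),
\end{equation}
which is exactly the left-hand side of \eqref{Paleyin} raised to the power $p$.

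Next I would establish the two endpoint estimates. For $p=2$: by the Plancherel theorem for commutative hypergroups (stated in Section \ref{Jacobinota}), $\|\widehat{f}\|_{L^2(\widehat{H}, d\pi)} = \|f\|_{L^2(H, d\lambda)}$, and since $d\nu = \psi^2 \, d\pi$ we get $\|Tf\|_{L^2(\widehat{H}, d\nu)} = \|\widehat{f}\|_{L^2(\widehat{H},d\pi)} = \|f\|_{L^2(H,d\lambda)}$, so $T$ is bounded $L^2 \to L^2$ with norm $1$. For the weak-type $(1,1)$ estimate: since $\|\widehat{f}\|_{L^\infty(\widehat{H})} \leq \|f\|_{L^1(H,d\lambda)}$ (the Fourier transform is norm-decreasing into $L^\infty$, as recalled from \cite[Theorem 2.2.4]{Bloom}), for $s>0$ the set $\{\chi : |(Tf)(\chi)| > s\}$ is contained in $\{\chi : \psi(\chi) < \|f\|_{L^1}/s\}$, hence
\begin{equation} \nonumber
\nu\{\chi : |(Tf)(\chi)| > s\} \leq \int_{\psi(\chi) < \|f\|_{L^1}/s} \psi(\chi)^2 \, d\pi(\chi).
\end{equation}
The key computation is to bound this integral by $C M_\psi \|f\|_{L^1}/s$. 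Writing $\psi(\chi)^2 = 2\int_0^{\psi(\chi)} \tau \, d\tau$ and applying Fubini, one gets $\int_{\psi < y} \psi^2 \, d\pi = 2\int_0^{y} \tau \, \pi\{\chi : \tau \leq \psi(\chi) < y\} \, d\tau \leq 2\int_0^y \tau \cdot \frac{M_\psi}{\tau} \, d\tau = 2 M_\psi y$, using the defining bound $\pi\{\psi \geq \tau\} \leq M_\psi/\tau$. With $y = \|f\|_{L^1}/s$ this yields $\nu\{|Tf| > s\} \leq 2 M_\psi \|f\|_{L^1}/s$, i.e. $T$ is weak-type $(1,1)$ from $L^1(H,d\lambda)$ to $L^{1,\infty}(\widehat{H}, d\nu)$ with constant $2M_\psi$.

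Finally I would invoke the Marcinkiewicz interpolation theorem for sublinear operators acting between $L^p$ spaces over possibly different measure spaces (the target measure $\nu$ is fixed throughout, so the standard real-interpolation machinery applies): interpolating the strong $(2,2)$ bound with constant $1$ and the weak $(1,1)$ bound with constant $2M_\psi$, at the exponent $\theta$ with $\frac{1}{p} = \frac{1-\theta}{1} + \frac{\theta}{2} = 1 - \frac{\theta}{2}$, i.e. $\theta = 2(1-\tfrac{1}{p}) = \tfrac{2}{p'}$, gives $\|T\|_{L^p \to L^p} \lesssim (2M_\psi)^{1-\theta} \cdot 1^{\theta} = (2M_\psi)^{2/p - 1} \simeq M_\psi^{(2-p)/p}$. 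Unwinding the definition of $T$ and $\nu$ produces exactly \eqref{Paleyin}. I expect the main obstacle to be purely bookkeeping: verifying that the real interpolation theorem is legitimately applicable with the unchanged target measure $\nu$ on the (non-$\sigma$-finite in general) space $\widehat{H}$, and that $\psi$ being merely positive (not bounded away from $0$ or $\infty$) causes no integrability pathology — this is handled by first proving the estimate for, say, $f \in L^1 \cap L^2$ with $\widehat{f}$ compactly supported, or by a truncation of $\psi$, and then passing to the limit by density and monotone convergence.
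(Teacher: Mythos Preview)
Your proposal is correct and follows essentially the same route as the paper: introduce the weighted measure $d\nu=\psi^2\,d\pi$, define $Tf=\widehat{f}/\psi$, verify the strong $(2,2)$ bound via Plancherel and the weak $(1,1)$ bound via $\|\widehat{f}\|_\infty\le\|f\|_1$ together with the layer-cake estimate $\int_{\psi<y}\psi^2\,d\pi\lesssim M_\psi y$, and conclude by Marcinkiewicz interpolation. Your layer-cake computation (writing $\psi^2=2\int_0^\psi \tau\,d\tau$ and using Fubini) is exactly the paper's argument up to the change of variable $\tau=t^2$, and the resulting constant $M_\psi^{(2-p)/p}$ matches.
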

	\begin{proof}  Let us consider a measure $\nu$ on $\widehat{H}$ given by \begin{equation} \label{nu}
	    \nu(\chi)= \psi(\chi)^2 d\pi(\chi).
	\end{equation}
	We define the corresponding $L^p(\widehat{H}, \nu)$- space, $1 \leq p<\infty,$ as the space of all complex-valued function $f$ defined by $\widehat{H}$ such that 
	$$ \|f\|_{L^p(\widehat{H}, \nu)}:= \left( \int_{\widehat{H}} |f(\chi)|^p \, \psi(\chi)^2 d\pi(\chi) \right)^{\frac{1}{p}}<\infty.$$
	 We define a sublinear operator $T$ for $f \in L^p(H, d\lambda)$ by 
	$$ Tf(\chi):= \frac{|\widehat{f}(\chi)|}{\psi(\chi)} \in L^p(\widehat{H}, \nu).$$
We will show that $T$ is well-defined and bounded from $L^p(H, d\lambda)$ to $L^p(\widehat{H}, \nu)$ for any $1 < p \leq 2.$	
In other words, we claim the following estimate:
\begin{equation} \label{vis10paley}
    \|Tf\|_{L^p(\widehat{H}, \nu)}= \left( \int_{\widehat{H}} \frac{|\widehat{f}(\chi)|^p}{\psi(\chi)^p}\,\psi(\chi)^2 d\pi(\chi) \right)^{\frac{1}{p}} \lesssim M_{\psi}^{\frac{2-p}{p}} \|f\|_{L^p(H, d \lambda)},
\end{equation}
which will give us the required inequality \eqref{Paleyin} with $M_\psi := \sup_{t>0} \int_{\underset{\psi(\chi) \geq t}{\chi \in \widehat{H}}} d\pi(\lambda).$ 
	We will show that $T$ is weak-type $(2,2)$ and weak-type $(1,1).$  More precisely, with the distribution function,  
$$\nu(y; Tf)= \int_{\underset{\frac{|\widehat{f}(\chi)|} {\psi(\chi)} \geq y}{\chi \in \widehat{H}}} \psi(\chi)^2 d\pi(\chi), $$ where $\nu$ is given by formula \eqref{nu}, we show that 
\begin{equation} \label{vish5.4}
    \nu(y; Tf) \leq \left( \frac{M_2 \|f\|_{L^2(H, d\lambda)}}{y} \right)^2 \,\,\,\,\,\,\text{with norm}\,\, M_2=1,
\end{equation}
\begin{equation} \label{vish5.5}
    \nu(y; Tf) \leq \frac{M_1 \|f\|_{L^1(H, d\lambda)}}{y}\,\,\,\,\,\,\text{with norm}\,\, M_1=M_\psi.
\end{equation} 
	Then the estimate \eqref{vis10paley} follows from the Marcinkiewicz interpolation Theorem. Now, to show \eqref{vish5.4}, using Plancherel identity we get
	\begin{align*}
    y^2 \nu(y; Tf) &\leq \sup_{y>0}y^2 \nu(y; Tf)=: \|Tf\|^2_{L^{2, \infty}(\widehat{H}, \nu)}  \leq \|Tf\|^2_{L^2(\widehat{H}, \nu)} \\&= \int_{\widehat{H}} \left( \frac{|\widehat{f}(\chi)|}{\psi(\chi)} \right)^2 \psi(\chi)^2 d\pi(\chi) \\&= \int_{\widehat{H}} |\widehat{f}(\chi)|^2\, d\pi(\chi)  = \|f\|_2^2.  
    \end{align*}
Thus, $T$ is type $(2,2)$ with norm $M_2 \leq 1.$ Further, we show that $T$ is of weak type $(1,1)$ with norm $M_1=M_\psi$; more precisely, we show that 
\begin{align} \label{11weak}
    \nu \left\{ \chi \in \widehat{H}: \frac{|\widehat{f}(\chi)|}{\psi(\chi)}>y \right\} \lesssim M_\psi \frac{\|f\|_{L^1(H, d\lambda)}}{y}.
\end{align}
The left hand side is an integral $ \int \psi(\chi) d\pi(\chi)$ taken over all those $\chi \in \widehat{H}$ for which $\frac{|\widehat{f}(\chi)|}{\psi(\chi)}>y.$
Since $|\widehat{f}(\chi)| \leq \|f\|_1$ for all $\chi \in \widehat{H}$ we have
$$\left\{ \chi \in \widehat{H}: \frac{|\widehat{f}(\chi)|}{\psi(\chi)}>y \right\} \subset \left\{ \chi \in \widehat{H}: \frac{\|f\|_1}{\psi(\chi)}>y \right\},$$ 
 for any $y>0$ and, therefore,
$$\nu \left\{ \chi \in \widehat{H}: \frac{|\widehat{f}(\chi)|}{\psi(\chi)}>y \right\} \leq \nu \left\{ \chi \in \widehat{H}: \frac{\|f\|_1}{\psi(\chi)}>y \right\}.$$

Now by setting $w:=\frac{\|f\|_1}{y},$ we have 
\begin{align}
    \nu \left\{ \chi \in \widehat{H}: \frac{\|f\|_1}{\psi(\chi)}>y \right\} \leq  \int_{\overset{\chi \in \widehat{H}}{\psi(\chi) \leq w} } \psi(\chi)^2\, d\pi(\chi).
\end{align}
Now we claim that 
\begin{align} \label{claim}
    \int_{\overset{\chi \in \widehat{H}}{\psi(\chi) \leq w}} \psi(\chi)^2\, d\pi(\chi) \lesssim M_\psi w.
\end{align}
Indeed, first we notice that 
\begin{align*}
    \int_{\overset{\chi \in \widehat{H}}{\psi(\chi) \leq w} } \psi(\chi)^2\, d\pi(\chi) = \int_{\overset{\chi \in \widehat{H}}{\psi(\chi) \leq w} } \, d\pi(\chi) \int_0^{\psi(\chi)^2} d\tau. 
\end{align*}
By interchanging the order of integration we get 
\begin{align*}
    \int_{\overset{\chi \in \widehat{H}}{\psi(\chi) \leq w} } \,d\pi(\chi) \int_0^{\psi(\chi)^2} d\tau = \int_{0}^{w^2} d\tau \int_{\underset{\tau^{\frac{1}{2}} \leq \psi(\chi) \leq w}{\chi \in \widehat{H}}} d\pi(\chi).
\end{align*}
Further, by making substitution  $\tau= t^2,$ it gives 
\begin{align*}
    \int_{0}^{w^2} d\tau \int_{\underset{\tau^{\frac{1}{2}} \leq \psi(\chi) \leq w}{\chi \in \widehat{H}}} d\pi(\chi) &= 2 \int_0^w t\, dt \int_{\underset{t \leq \psi(\chi) \leq w}{\chi \in \widehat{H}}} d\pi(\chi) \\&\leq 2 \int_0^w t\, dt \int_{\underset{t \leq \psi(\chi) }{\chi \in \widehat{H}}} d\pi(\chi).
\end{align*}
Since 
$$ t \int_{\underset{t \leq \psi(\chi) }{\chi \in \widehat{H}}} d\pi(\chi) \leq \sup_{t>0} t \int_{\underset{t \leq \psi(\chi) }{\chi \in \widehat{H}}} d\pi(\chi) = M_\psi $$ is finite by assumption $M_\psi<\infty,$ we have 
\begin{align*}
    2 \int_0^w t\, dt \int_{\underset{t \leq \psi(\chi) }{\chi \in \widehat{H}}} d\pi(\chi) \lesssim M_\psi w.
\end{align*}
This establishes our claim \eqref{claim} and eventually proves \eqref{11weak}. So, we have proved \eqref{vish5.4} and \eqref{vish5.5}. Then  by using the Marcinkiewicz interpolation theorem with $p_1=1$ and $p_2=2$ and $\frac{1}{p}= 1-\theta+\frac{\theta}{2}$ we now obtain
$$\left( \int_{\widehat{H}} \left(\frac{|\widehat{f}(\chi)|}{\psi(\chi)} \right)^p \psi(\chi)^2\, d\pi(\chi) \right)^{\frac{1}{p}}= \|Tf\|_{L^p(\widehat{H},\, \nu)} \lesssim M_\psi^{\frac{2-p}{p}} \|f\|_{L^p(H, d\lambda)}.$$
This completes the proof of the theorem. \end{proof}

	\subsection{Hausdorff-Young-Paley inequality for commutative hypergroups} In this subsection, we prove the Hausdorff-Young-Paley inequality for commutative  hypergroups. The Hausdorff-Young-Paley inequality is an important inequality in itself but it serves as an essential tool to prove $L^p$-$L^q$ Fourier multiplier for commutative  hypergroups. 
	The following theorem \cite{BL} is useful to  obtain  our result.

\begin{thm} \label{interpolation} Let $d\mu_0(x)= \omega_0(x) d\mu'(x),$ $d\mu_1(x)= \omega_1(x) d\mu'(x),$ and write $L^p(\omega)=L^p(\omega d\mu')$ for the weight $\omega.$ Suppose that $0<p_0, p_1< \infty.$ Then 
$$(L^{p_0}(\omega_0), L^{p_1}(\omega_1))_{\theta, p}=L^p(\omega),$$ where $0<\theta<1, \, \frac{1}{p}= \frac{1-\theta}{p_0}+\frac{\theta}{p_1}$ and $\omega= \omega_0^{\frac{p(1-\theta)}{p_0}} \omega_1^{\frac{p\theta}{p_1}}.$
\end{thm}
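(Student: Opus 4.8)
The plan is to reduce the weighted couple to a couple of ordinary $L^p$-spaces over a \emph{common} measure, where the interpolation identity is classical, and to transport the conclusion back by a measure-preserving multiplication. Recall that $(A_0,A_1)_{\theta,p}$ consists of those $f\in A_0+A_1$ with
$$\|f\|_{\theta,p}=\left(\int_0^\infty \left(s^{-\theta}K(s,f)\right)^p\,\frac{ds}{s}\right)^{\frac1p}<\infty,\qquad K(s,f)=\inf_{f=f_0+f_1}\left(\|f_0\|_{A_0}+s\,\|f_1\|_{A_1}\right).$$
The \textbf{transport principle} I will use repeatedly is that if a multiplication operator $T$ is simultaneously an isometric isomorphism $A_i\to B_i$ for $i=0,1$, then $K(s,Tf;B_0,B_1)=K(s,f;A_0,A_1)$, so $T$ is an isometric isomorphism of interpolation spaces as well.

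First I would dispose of the degenerate case $p_0=p_1=:p$. Here the couple consists of Banach lattices over $\mu'$ and the $K$-functional is computed pointwise up to constants, $K(s,f)^p\simeq\int|f|^p\min(\omega_0,s^p\omega_1)\,d\mu'$; one verifies the two limiting regimes $s\to 0$ and $s\to\infty$ to fix this formula. Inserting it into the defining integral, interchanging the order of integration, and evaluating the one-dimensional $s$-integral (split at $s=(\omega_0/\omega_1)^{1/p}$) yields $\|f\|_{\theta,p}^p\simeq\int|f|^p\,\omega_0^{1-\theta}\omega_1^{\theta}\,d\mu'$, which is exactly $\|f\|_{L^p(\omega)}$ since for $p_0=p_1=p$ the claimed exponents $\frac{p(1-\theta)}{p_0}$ and $\frac{p\theta}{p_1}$ collapse to $1-\theta$ and $\theta$.

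The substantive case is $p_0\neq p_1$, and here the central device is a single multiplication operator that trivialises both weights at once. Since the weights are positive and measurable, I may set
$$a:=\left(\frac{\omega_0}{\omega_1}\right)^{\frac{1}{p_0-p_1}},\qquad dm:=\omega_0\,a^{-p_0}\,d\mu',$$
a positive measure, and a direct check gives $\omega_0\,d\mu'=a^{p_0}\,dm$ and $\omega_1\,d\mu'=a^{p_1}\,dm$. Consequently $Tf:=af$ is simultaneously an isometry $L^{p_0}(\omega_0)\to L^{p_0}(m)$ and $L^{p_1}(\omega_1)\to L^{p_1}(m)$, hence an isomorphism of the Banach couples $(L^{p_0}(\omega_0),L^{p_1}(\omega_1))$ and $(L^{p_0}(m),L^{p_1}(m))$. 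By the transport principle, $(L^{p_0}(\omega_0),L^{p_1}(\omega_1))_{\theta,p}=T^{-1}\big(L^{p_0}(m),L^{p_1}(m)\big)_{\theta,p}$.

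Finally I would invoke the classical \emph{common-measure} identity $(L^{p_0}(m),L^{p_1}(m))_{\theta,p}=L^p(m)$, valid precisely when $\frac1p=\frac{1-\theta}{p_0}+\frac{\theta}{p_1}$, whose proof rests on Holmstedt's formula for $K(s,f)$ in terms of the nonincreasing rearrangement $f^*$ followed by an evaluation of the $s$-integral; this base case is where the genuine analytic content sits and is the main obstacle. Transporting back, $f$ lies in the interpolation space iff $af\in L^p(m)$, i.e. iff $\int|f|^p a^p\,dm<\infty$; writing $a^p\,dm=\omega\,d\mu'$ gives $\omega=\omega_0\,a^{\,p-p_0}=\omega_0^{(p-p_1)/(p_0-p_1)}\,\omega_1^{(p_0-p)/(p_0-p_1)}$. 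A short computation from the definition of $\theta$ shows $\frac{p-p_1}{p_0-p_1}=\frac{p(1-\theta)}{p_0}$ and $\frac{p_0-p}{p_0-p_1}=\frac{p\theta}{p_1}$, which is exactly the asserted weight $\omega=\omega_0^{p(1-\theta)/p_0}\,\omega_1^{p\theta/p_1}$, completing the identification. The weighted reduction itself is a clean isometry argument once $T$ is identified; the only delicate point beyond the base case is checking that $a$ is finite and positive $\mu'$-a.e., so that $T$ and $m$ are well defined.
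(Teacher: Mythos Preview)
The paper does not prove this theorem at all: it is stated as a quotation from Bergh--L\"ofstr\"om \cite{BL} (this is the classical Stein--Weiss interpolation theorem, Theorem~5.5.1 there), and is used only as a black box to derive Corollary~\ref{interpolationoperator}. So there is no ``paper's own proof'' to compare against.

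Your argument is correct and is in fact the standard one. The reduction via the multiplication map $Tf=af$ with $a=(\omega_0/\omega_1)^{1/(p_0-p_1)}$ is exactly how Bergh--L\"ofstr\"om handle the weighted case: one checks, as you did, that $T$ is a simultaneous isometry onto the unweighted couple $(L^{p_0}(m),L^{p_1}(m))$, invokes the unweighted identity (their Theorem~5.2.1, via the power theorem or Holmstedt's formula), and transports back. Your exponent bookkeeping is right, and the separate treatment of $p_0=p_1$ is a sensible way to avoid dividing by zero in the definition of $a$. The only point worth flagging is cosmetic: the statement in the paper allows $0<p_0,p_1<\infty$, so the spaces need not be Banach (only quasi-Banach), but the $K$-functional machinery and your isometry argument go through unchanged in that generality.
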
 

The following corollary is immediate.

\begin{cor}\label{interpolationoperator} Let $d\mu_0(x)= \omega_0(x) d\mu'(x),$ $d\mu_1(x)= \omega_1(x) d\mu'(x).$ Suppose that $0<p_0, p_1< \infty.$  If a continuous linear operator $A$ admits bounded extensions, $A: L^p(Y,\mu)\rightarrow L^{p_0}(\omega_0) $ and $A: L^p(Y,\mu)\rightarrow L^{p_1}(\omega_1).$  Then, there exists a bounded extension $A: L^p(Y,\mu)\rightarrow L^{b}(\omega) $ of $A$, where  $0<\theta<1, \, \frac{1}{b}= \frac{1-\theta}{p_0}+\frac{\theta}{p_1}$ and 
 $\omega= \omega_0^{\frac{b(1-\theta)}{p_0}} \omega_1^{\frac{b\theta}{p_1}}.$
\end{cor}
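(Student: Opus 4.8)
The plan is to read the statement off Theorem~\ref{interpolation} combined with the basic interpolation property of the real method. Recall that $(\cdot,\cdot)_{\theta,q}$ is an (exact) interpolation functor of exponent $\theta$ on compatible couples: whenever a linear operator $A$ maps $A_0\to B_0$ and $A_1\to B_1$ boundedly, it also maps $(A_0,A_1)_{\theta,q}\to(B_0,B_1)_{\theta,q}$ boundedly, and here the fine index $q\in(0,\infty]$ is a free parameter, unrelated to $\theta$. I would apply this with $A_0=A_1=L^p(Y,\mu)$, $B_0=L^{p_0}(\omega_0)$, $B_1=L^{p_1}(\omega_1)$, and $q=b$, where $b$ is defined by $\frac1b=\frac{1-\theta}{p_0}+\frac{\theta}{p_1}$ (so $0<b<\infty$). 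The couples involved are compatible: the diagonal couple $(L^p(Y,\mu),L^p(Y,\mu))$ trivially so, and $(L^{p_0}(\omega_0),L^{p_1}(\omega_1))$ because, the weights being positive, both spaces are subspaces of the space of $\mu'$-measurable functions. Thus the two bounded extensions of $A$ yield a bounded operator $A\colon\big(L^p(Y,\mu),L^p(Y,\mu)\big)_{\theta,b}\to\big(L^{p_0}(\omega_0),L^{p_1}(\omega_1)\big)_{\theta,b}$.

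It remains to identify both interpolation spaces. For the target, Theorem~\ref{interpolation} with fine index $b$ gives $\big(L^{p_0}(\omega_0),L^{p_1}(\omega_1)\big)_{\theta,b}=L^b(\omega)$ with precisely the weight $\omega=\omega_0^{b(1-\theta)/p_0}\omega_1^{b\theta/p_1}$ appearing in the corollary. For the domain, one computes the $K$-functional of the diagonal couple: for $f$ in a (quasi-)Banach space $X$ one has $K(t,f;X,X)=\min(1,t)\|f\|_X$, hence $\|f\|_{(X,X)_{\theta,b}}=c_{\theta,b}\|f\|_X$ with $c_{\theta,b}=\big(\int_0^\infty(t^{-\theta}\min(1,t))^b\,\tfrac{dt}{t}\big)^{1/b}<\infty$, so that $(X,X)_{\theta,b}=X$ with equivalent norms; taking $X=L^p(Y,\mu)$ collapses the domain couple back to $L^p(Y,\mu)$. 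Substituting the two identifications into the boundedness statement of the previous paragraph produces the asserted bounded extension $A\colon L^p(Y,\mu)\to L^b(\omega)$.

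I do not expect any genuine obstacle — this is why the paper labels the corollary immediate. The only points that deserve a line of care are that the fine index of the real method may be chosen to be exactly $b$ regardless of the value of $\theta$, and the (elementary) identification $(X,X)_{\theta,b}=X$ used to recover $L^p(Y,\mu)$ on the domain side. As an alternative to invoking the abstract interpolation property, one can argue directly: for $f\in L^p(Y,\mu)$ the splitting $Af=Af+0=0+Af$ gives $K(t,Af;L^{p_0}(\omega_0),L^{p_1}(\omega_1))\le\min\!\big(\|A\|_{L^p\to L^{p_0}},\,t\,\|A\|_{L^p\to L^{p_1}}\big)\|f\|_{L^p(Y,\mu)}$, and integrating this against $t^{-\theta b}\,\tfrac{dt}{t}$ reproduces the same conclusion.
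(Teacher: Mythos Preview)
Your argument is correct and is precisely the standard deduction the paper has in mind: the paper gives no proof beyond the sentence ``The following corollary is immediate,'' and the immediate step is exactly the one you spell out---apply the interpolation property of $(\cdot,\cdot)_{\theta,b}$ to the operator $A$, identify the target couple via Theorem~\ref{interpolation} (with fine index $b$ matching the exponent $\tfrac{1}{b}=\tfrac{1-\theta}{p_0}+\tfrac{\theta}{p_1}$), and collapse the diagonal domain couple $(L^p,L^p)_{\theta,b}=L^p$. Your remarks on the free choice of fine index and the elementary computation of $K(t,f;X,X)=\min(1,t)\|f\|_X$ are the only points requiring any care, and you handle them correctly.
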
 

Using the above corollary we now present the Hausdorff-Young-Paley inequality. 
\begin{thm}[Hausdorff-Young-Paley inequality] \label{HYP} Let $H$ be a commutative hypergroup equipped with a Haar measure $\lambda$ and let $\widehat{H}$ be the dual of $H$ equipped with measure $\pi.$ Let $1<p\leq 2,$ and let   $1<p \leq b \leq p' \leq \infty,$ where $p'= \frac{p}{p-1}.$ If $\psi(\chi)$ is a positive function on $\widehat{H}$ such that 
 \begin{equation}
	        M_\psi := \sup_{t>0} t \int_{\underset{\psi(\chi)\geq t}{\chi \in \widehat{H}}} d\pi(\chi)
	        \end{equation}
is finite, then for every $f \in L^p(H, d\lambda)$ 
 we have
\begin{equation} \label{Vish5.9}
    \left( \int_{\widehat{H}}  \left( |\widehat{f}(\chi)| \psi(\chi)^{\frac{1}{b}-\frac{1}{p'}} \right)^b d\pi(\chi)  \right)^{\frac{1}{b}} \lesssim M_\varphi^{\frac{1}{b}-\frac{1}{p'}} \|f\|_{L^p(H, d\lambda)}.
\end{equation}
\end{thm}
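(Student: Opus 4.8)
The plan is to obtain the Hausdorff–Young–Paley inequality by interpolating between the Paley inequality of Theorem~\ref{Paley} and the Hausdorff–Young inequality of Theorem~\ref{HY}, using the weighted interpolation result recorded in Corollary~\ref{interpolationoperator}. First I would set up the right measure space: take $\mu' = \pi$ on $\widehat H$, and regard the Fourier transform $A\colon f \mapsto \widehat f$ as a linear operator from $L^p(H,d\lambda)$ into weighted $L^r$-spaces over $(\widehat H,\pi)$. The two endpoint estimates are the following. On the one hand, Theorem~\ref{Paley} says precisely that $A$ maps $L^p(H,d\lambda)$ boundedly into $L^p(\omega_0)$ where $\omega_0(\chi) = \psi(\chi)^{2-p}$ (i.e.\ $d\mu_0 = \psi^{2-p}\,d\pi$), with operator norm $\lesssim M_\psi^{(2-p)/p}$. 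On the other hand, Theorem~\ref{HY} says that $A$ maps $L^p(H,d\lambda)$ boundedly into $L^{p'}(\widehat H,\pi) = L^{p'}(\omega_1)$ with $\omega_1 \equiv 1$ (i.e.\ $d\mu_1 = d\pi$) and operator norm $\le 1$. Here of course one first works on the dense subspace $L^1\cap L^2$ (or $L^1\cap L^p$) of $L^p$ where $\widehat f$ is defined pointwise, and extends by density at the end.

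Next I would apply Corollary~\ref{interpolationoperator} with $p_0 = p$, $p_1 = p'$, $\omega_0 = \psi^{2-p}$, $\omega_1 = 1$. For a given $b$ with $p \le b \le p'$, choose $\theta \in [0,1]$ by $\frac1b = \frac{1-\theta}{p} + \frac{\theta}{p'}$; the constraint $p \le b \le p'$ is exactly what guarantees $\theta \in [0,1]$, so the interpolation is legitimate. The corollary then yields a bounded extension $A\colon L^p(H,d\lambda) \to L^b(\omega)$ with
\begin{equation} \nonumber
\omega = \omega_0^{\frac{b(1-\theta)}{p_0}}\,\omega_1^{\frac{b\theta}{p_1}} = \psi(\chi)^{(2-p)\frac{b(1-\theta)}{p}},
\end{equation}
and operator norm controlled by the product $\big(M_\psi^{(2-p)/p}\big)^{1-\theta}\cdot 1^{\theta}$ of the endpoint norms (this is the Riesz–Thorin-type bound for real interpolation up to an absolute constant, which is all that is needed since the statement is up to $\lesssim$). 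So the remaining task is purely bookkeeping: simplify the exponent on $\psi$ in $\omega$ and the exponent on $M_\psi$ in the norm, and check they both equal $b\big(\frac1b - \frac{1}{p'}\big)$ and $\frac1b - \frac{1}{p'}$ respectively.

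The arithmetic runs as follows. From $\frac1b = \frac{1-\theta}{p} + \frac{\theta}{p'}$ and $\frac1p + \frac1{p'} = 1$ one gets $\frac1b = \frac1p - \theta\big(\frac1p - \frac1{p'}\big)$, hence $\frac1p - \frac1b = \theta\big(\frac1p - \frac1{p'}\big)$ and also $\frac1b - \frac1{p'} = (1-\theta)\big(\frac1p - \frac1{p'}\big)$. Since $\frac1p - \frac1{p'} = \frac{2-p}{p}$, we obtain $(1-\theta)\frac{2-p}{p} = \frac1b - \frac1{p'}$. Therefore the exponent on $M_\psi$ is $(1-\theta)\frac{2-p}{p} = \frac1b - \frac1{p'}$, matching the claim, and the exponent on $\psi$ in $\omega$ is $(2-p)\frac{b(1-\theta)}{p} = b\big(\frac1b - \frac1{p'}\big)$, so that $\omega(\chi) = \psi(\chi)^{\,b(\frac1b - \frac1{p'})}$ and
\begin{equation} \nonumber
\|A f\|_{L^b(\omega)}^b = \int_{\widehat H} |\widehat f(\chi)|^b \,\psi(\chi)^{b(\frac1b - \frac1{p'})}\,d\pi(\chi) = \int_{\widehat H}\Big(|\widehat f(\chi)|\,\psi(\chi)^{\frac1b - \frac1{p'}}\Big)^b d\pi(\chi),
\end{equation}
which is exactly the left-hand side of \eqref{Vish5.9}. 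This yields the inequality with constant $\lesssim M_\psi^{\frac1b - \frac1{p'}}$ (the $M_\varphi$ in the displayed statement being a typo for $M_\psi$). I do not expect a genuine obstacle here; the only points requiring mild care are the two limiting cases $b = p$ (where $\theta = 0$ and one recovers Paley) and $b = p'$ (where $\theta = 1$, $\omega \equiv 1$, and one recovers Hausdorff–Young), the density argument needed to pass from $L^1\cap L^2$ to all of $L^p$, and correctly tracking that real interpolation reproduces the endpoint operator norms up to a constant absorbed into $\lesssim$.
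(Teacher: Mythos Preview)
Your proposal is correct and follows essentially the same approach as the paper: interpolate between the Paley inequality (giving $A\colon L^p\to L^p(\psi^{2-p}\,d\pi)$) and the Hausdorff--Young inequality (giving $A\colon L^p\to L^{p'}(d\pi)$) via Corollary~\ref{interpolationoperator}, then compute the interpolated weight. Your write-up is in fact more careful than the paper's, which does not explicitly track the constant $M_\psi^{1/b-1/p'}$ and contains a typo in the exponent of $\omega$ (writing $p$ for $b$); your bookkeeping of the exponents and your remark on the $M_\varphi$/$M_\psi$ typo are both accurate.
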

This naturally reduces to the Hausdorff-Young inequality when $b=p'$ and to the Paley inequality \eqref{Paleyin} when $b=p.$
\begin{proof}
From Theorem \ref{Paley}, the operator  defined by 
$$Af(\chi)= \widehat{f}(\chi),\,\,\,\,\chi \in \widehat{H}$$
is bounded from $L^p(H, d\lambda)$ to $L^{p}(\widehat{H},\omega_0  d\mu'),$ where $d\mu'(\chi)=d\pi(\chi)$ and $\omega_{0}(\chi)=  \psi(\chi)^{2-p}.$ From Theorem \ref{HY}, we deduce that $A:L^p(H, d\lambda) \rightarrow L^{p'}(\widehat{H}, \omega_1 d\mu')$ with $d\mu'(\chi)=d\pi(\chi)$ and   $\omega_1(\chi)= 1$  admits a bounded extension. By using the real interpolation (Corollary \ref{interpolationoperator} above) we will prove that $A:L^p(H, d\lambda) \rightarrow L^{b}(\widehat{H}, \omega d\mu'),$ $p\leq b\leq p',$ is bounded,
where the space $L^p(\widehat{H},\, \omega d\mu')$ is defined by the norm 
$$\|\sigma\|_{L^p(\widehat{H},\, \omega d\mu')}:=\left( \int_{\widehat{H}} |\sigma(\chi)|^p w(\chi) \,d\mu'(\chi) \right)^{\frac{1}{p}}= \left( \int_{\widehat{H} } |\sigma(\chi)|^p w(\chi) d\pi(\chi) \right)^{\frac{1}{p}}$$
 and $\omega(\chi)$ is positive function over $\widehat{H}$ to be determined. To compute $\omega,$ we can use Corollary \ref{interpolationoperator}, by fixing $\theta\in (0,1)$ such that $\frac{1}{b}=\frac{1-\theta}{p}+\frac{\theta}{p'}$. In this case $\theta=\frac{p-b}{b(p-2)},$ and 
 \begin{equation}
     \omega= \omega_0^{\frac{p(1-\theta)}{p_0}} \omega_1^{\frac{p\theta}{p_1}}= \psi(\chi)^{1-\frac{b}{p'}}.     
 \end{equation}
 Thus we finish the proof.
\end{proof}
\subsection{$L^p$-$L^q$  boundedness of Fourier multipliers on commutative  hypergroups } 

We begin with the definition of Fourier multipliers on commutative  hypergroups. For a function $h \in L^\infty(\widehat{H}, d\pi),$ define the operator $T_h$ as 
$$\widehat{T_hf}(\chi)=h(\chi) \widehat{f}(\chi),\,\,\,\, \chi \in \widehat{H},$$ for all $f$ belonging to a suitable function space on $\widehat{H}.$ The operator $T_h$ is called the Fourier multiplier on $H$ with symbol $h.$ It is clear that  $T_h$ is a bounded operator on  $L^2(H, d\lambda)$ by the Plancherel theorem. A Fourier multiplier commutes with the translation operators. In fact, the Fourier multipliers can be  characterized using translation operators \cite{SSK,Bloom, Kumar}.
	            
	    In the next result, we show that if the symbol $h$ of a Fourier multipliers $A$ defined on $C_c(H)$ satisfies certain H\"ormander's condition then $T_h$ can be extended as a bounded linear operator from $L^p(H, d\lambda)$ to $L^q(H, d\lambda)$ for the range $1<p \leq 2 \leq q <\infty.$

\begin{thm} \label{Jacobimult}  Let $1<p \leq 2 \leq q<\infty$ and let $H$ be a commutative  hypergroup. Suppose that $T$ is a  Fourier multiplier with symbol $h.$ Then we have 
$$\|T\|_{L^p(H,\, d\lambda) \rightarrow L^q(H,\, d\lambda)}\lesssim \sup_{s>0} s \left[ \int_{\{ \chi \in \widehat{H}: |h(\chi)|\geq s\}} d\pi(\chi) \right]^{\frac{1}{p}-\frac{1}{q}}.$$
   \end{thm}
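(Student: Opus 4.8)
The plan is to follow the now-standard interpolation-of-$L^{p,\infty}$-spaces argument that reduces the $L^p$--$L^q$ bound to the Hausdorff-Young-Paley inequality of Theorem~\ref{HYP}. First I would observe that it suffices, by duality and a density argument on $C_c(H)$, to treat the case where the right-hand side is finite, and to exploit the fact that for a Fourier multiplier the quantity
$$
\sup_{s>0} s \left[ \int_{\{ \chi \in \widehat{H}: |h(\chi)|\geq s\}} d\pi(\chi) \right]^{\frac{1}{\kappa}}, \qquad \frac{1}{\kappa} = \frac{1}{p}-\frac{1}{q},
$$
is comparable to $\|h\|_{L^{\kappa,\infty}(\widehat H, d\pi)}$, the weak-$L^\kappa$ (Lorentz) quasinorm of the symbol. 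So the theorem is equivalent to the estimate $\|T_h\|_{L^p(H,d\lambda)\to L^q(H,d\lambda)} \lesssim \|h\|_{L^{\kappa,\infty}(\widehat H, d\pi)}$ with $\tfrac1\kappa = \tfrac1p - \tfrac1q$.

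\textbf{Key steps.}
Step 1: reduce $q$ to $p'$. Since $1<p\le 2\le q<\infty$, one has $q \ge p'$ only when... in general $p' $ and $q$ need not be ordered, so instead I would argue directly: using the Hausdorff-Young inequality (Theorem~\ref{HY}) one has $\|\widehat{T_hf}\|_{L^{p'}(\widehat H,d\pi)} = \|h\widehat f\|_{L^{p'}(\widehat H,d\pi)}$, but this is not quite what is needed. The cleaner route is to apply the Hausdorff-Young-Paley inequality of Theorem~\ref{HYP} with a judicious choice of the weight $\psi$ and of the exponent $b$. Concretely, set $\psi := |h|^{\kappa}$ (or a truncation thereof), so that the hypothesis $M_\psi<\infty$ of Theorem~\ref{HYP} becomes exactly the assumed finiteness of $\sup_{s>0}s\,\pi\{|h|\ge s\}^{1/\kappa}$; and choose $b\in[p,p']$ so that $q' = b$ after transposing. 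Then Theorem~\ref{HYP} gives
$$
\left( \int_{\widehat H} \bigl( |\widehat f(\chi)|\,\psi(\chi)^{\frac1b - \frac1{p'}} \bigr)^{b}\, d\pi(\chi) \right)^{1/b} \lesssim M_\psi^{\frac1b-\frac1{p'}}\|f\|_{L^p(H,d\lambda)}.
$$
Step 2: I would then write $|h| = \psi^{1/\kappa}$ and note that the exponent of $\psi$ appearing in the Paley-type integral above matches $1/\kappa$ for the right choice of $b$; since the Fourier transform is a unitary from $L^2$ onto $L^2(\widehat H,d\pi)$ and $T_h$ is given on the Fourier side by multiplication by $h$, applying the adjoint of the Hausdorff-Young-Paley inequality yields a bound for $\|T_h f\|_{L^q(H,d\lambda)}$. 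This last step is the transposition/duality argument: the operator $f\mapsto \widehat f \cdot \psi^{1/b-1/p'}$ being bounded $L^p\to L^b(\widehat H,d\pi)$ implies, upon taking adjoints and using $h = $ (a power of $\psi$), that $T_h : L^p(H,d\lambda)\to L^q(H,d\lambda)$ is bounded with the stated norm control. One verifies that $1/b - 1/p' $, $1/\kappa$ and the relation $\tfrac1p - \tfrac1q = \tfrac1\kappa$ are mutually consistent, i.e. that $b$ can indeed be chosen in $[p,p']$.

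\textbf{Main obstacle.}
The delicate point is bookkeeping the exponents in Step~2 and making the duality rigorous: one must check that the weight power $\psi^{1/b-1/p'}$ produced by Theorem~\ref{HYP} matches exactly $|h|$ (equivalently $\psi^{1/\kappa}$) under the constraint $\tfrac1p-\tfrac1q=\tfrac1\kappa$, that the resulting conjugate exponent on the $H$-side is precisely $q$, and that $b = q'$ lies in the admissible range $[p,p']$ — this is where the hypotheses $1<p\le 2\le q<\infty$ are used in full. A secondary technical issue is justifying the adjoint computation for a possibly unbounded symbol, handled by truncating $h$ to $h\mathbf{1}_{\{1/n\le |h|\le n\}}$, proving the uniform bound, and passing to the limit by Fatou. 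Modulo these verifications, the argument is a direct consequence of Theorem~\ref{HYP} together with the Plancherel identity.
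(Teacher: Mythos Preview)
Your approach is essentially the paper's: apply Theorem~\ref{HYP} with weight $\psi=|h|^{\kappa}$ and exponent $b=q'$, so that $\psi^{1/b-1/p'}=|h|^{\kappa(1/q'-1/p')}=|h|$ (since $1/q'-1/p'=1/p-1/q=1/\kappa$), and combine with Hausdorff--Young to pass back to $H$. Two points of presentation where the paper is sharper than your sketch are worth noting. First, what you call ``the adjoint of the Hausdorff--Young--Paley inequality'' is really just the \emph{dual} Hausdorff--Young inequality applied up front: since $q\ge 2$ one has $q'\le 2$, hence $\|T_hf\|_{L^q(H)}\le \|\widehat{T_hf}\|_{L^{q'}(\widehat H)}=\|h\widehat f\|_{L^{q'}(\widehat H)}$, and \emph{then} Theorem~\ref{HYP} bounds the right-hand side directly; there is no need to transpose the weighted operator. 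Second, your claim that $b=q'\in[p,p']$ follows from the hypotheses $1<p\le 2\le q<\infty$ is only half right: $q'\le p'$ is automatic (equivalent to $p\le q$), but $q'\ge p$ is equivalent to $1/p+1/q\ge 1$ and can fail (e.g.\ $p=3/2$, $q=4$). The paper handles the case $q'<p$ by the operator duality $\|T_h\|_{L^p\to L^q}=\|T_{\bar h}\|_{L^{q'}\to L^{p'}}$, which swaps $(p,q)\leftrightarrow(q',p')$ and restores the needed range; you would need to insert this step to make your argument complete.
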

\begin{proof} 
 Let us first assume that $p \leq q',$ where $\frac{1}{q}+\frac{1}{q'}=1.$ Since $q' \leq 2,$ dualising  the Hausdorff-Young inequality gives that 
 \begin{align*}
     \|Tf\|_{L^q(H,\, d\lambda)} \leq \|\widehat{Tf}\|_{L^{q'}(\widehat{H}, \, d\pi)} = \|h \widehat{f}\|_{L^{q'}(\widehat{H}, d\pi)}.
 \end{align*}
 
 The case $q' \leq (p')'=p$ can be reduced to the case $p \leq q'$ as follows. Using the duality of $L^p$-spaces we have $\|T\|_{L^p(H,\, d\lambda) \rightarrow L^q(H,\, d\lambda)}= \|T^*\|_{L^{q'}(H,\, d\lambda) \rightarrow L^{p'}(H,\, d\lambda)}.$ The symbol of adjoint operator $T^*$  is equal to $\bar{h}$ and obviously we have $|\bar{h}|= |h|$ (see Theorem 4.2 in \cite{ARN1}). 
 Now, we are in a position to apply Theorem \ref{HYP}. Set $\frac{1}{p}-\frac{1}{q}=\frac{1}{r}.$ Now, by applying  Theorem \ref{HYP} with $\psi= |h|^r$ with $b=q'$ we get 
 $$\|h \widehat{f}\|_{L^{q'}(H, d\lambda)} \lesssim \left(  \sup_{s>0} s \int_{\underset{|h(\chi)|^r > s}{\chi \in \widehat{H}}} d\pi(\chi)   \right)^{\frac{1}{r}} \|f\|_{L^p(H,\, d\lambda)} $$ for all $f \in L^p(H, d\lambda),$ in view of $\frac{1}{p}-\frac{1}{q}=\frac{1}{q'}-\frac{1}{p'}=\frac{1}{r.}$ Thus, for $1<p \leq 2 \leq q<\infty,$ we obtain 
 
 $$\|Tf\|_{L^q(H, d\lambda)} \lesssim \left(  \sup_{s>0} s \int_{\underset{|h(\chi)|^r > s}{\chi \in \widehat{H}}} d\pi(\chi)   \right)^{\frac{1}{r}} \|f\|_{L^p(H, d\lambda)}.$$
 Further, the proof follows from the following inequality: 
 \begin{align*}
     \left(  \sup_{s>0} s \int_{\underset{|h(\chi)|^r > s}{\chi \in \widehat{H}}} d\pi(\chi)  \right)^{\frac{1}{r}} &= \left(  \sup_{s>0} s \int_{\underset{|h(\chi)| > s^{\frac{1}{r}}}{\chi \in \widehat{H}}} d\pi(\chi)    \right)^{\frac{1}{r}} \\&=  \left(  \sup_{s>0} s^{\frac{1}{r}} \int_{\underset{|h(\chi)| > s} {\chi \in \widehat{H}}} d\pi(\chi)    \right)^{\frac{1}{r}} \\&= \sup_{s>0} s \left(   \int_{\underset{|h(\chi)| > s} {\chi \in \widehat{H}}} d\pi(\chi)    \right)^{\frac{1}{r}},
 \end{align*} proving Theorem \ref{Jacobimult}.
\end{proof}

\section{$L^p$-$L^q$  multipliers on Ch\'{e}bli-Trim\`{e}che hypergroups} \label{sec4}

\subsection{Interlude on Ch\'{e}bli-Trim\`{e}che hypergroups} \label{sec4.1}
An important and motivating example of a hypergroup is the algebra of finite radial measures on a noncompact rank one symmetric space under the usual convolution; as radial measures on a noncompact symmetric space can be viewed as measures on $\R_+:=[0,  \infty),$ this endows $\R_+$ with a hypergroup structure with involution being the identity map. The hypergroup structures arising this way on $\R_+$ is a particular case of general class of hypergroup structure on $\R_+$ arising from the Strum-Liouville boundary value problems where the solutions coincide with the characters of the hypergroup in question \cite[Section 3.5]{BX}. 
	
	In this paper, we are interested in a special class of ``one dimensional hypergroups'' \cite{Zeuner} on $\R_+$ called {\it Ch\'{e}bli-Trim\`{e}che hypergroups} with the convolution structure related to the following second-order differential operator 
	\begin{equation} \label{Visht}
	L=L_{A,x}:= -\frac{d^2}{dx^2}- \frac{A'(x)}{A(x)} \frac{d}{dx}, 
	\end{equation}
	where the function $A,$ called a {\it Ch\'{e}bli-Trim\`{e}che function}, is continuous on $\R_+,$ twice continuously  differentiable on $\R^*_+:=(0, \infty)$ and satisfies the following properties: 
	\begin{enumerate}
		\item[(i)] $A(0)=0$ and $A$ is positive on $\R^*_+.$
		\item[(ii)] $A$ is an increasing function and $A(x) \rightarrow \infty$ as $x \rightarrow \infty.$
		\item [(iii)] $\frac{A'}{A}$ is a decreasing $C^\infty$- function on $\R^*_+$ and hence $\rho:= \frac{1}{2} \lim\limits_{x \rightarrow \infty} \frac{A'(x)}{A(x)} \geq 0$ exists.  
		\item [(iv)] $\frac{A'(x)}{A(x)}= \frac{2\alpha}{x}+B(x)$ on a neighbourhood of $0$, where $\alpha>-\frac{1}{2}$ and $B$ is an odd $C^\infty$-function on $\R.$ 
	\end{enumerate}  
	
	Now, we define the main object of this section, namely, the Ch\'{e}bli-Trim\`{e}che hypergroups. For a detailed study of these hypergroups one can refer to \cite{Bloom,Kha297,Kha97,Kha81,Houcine,Houcine1,redbook}.
	\begin{defn} \cite{BX,BX1998,Zeuner}
		A hypergroup $(\R_+, *)$ is called a Ch\'{e}bli-Trim\`{e}che  hypergroup if there exists a Ch\'{e}bli-Trim\`{e}che function $A$ such that for  any real-valued $C^\infty$-function $f$ on $\R_+,$ i.e., the restriction of an even non-negative $C^\infty$-function on $\R,$ the generalised translation $u(x,y)=T_xf(y):=  \int_0^\infty f(t)\, d(\delta_x*\delta_y)(t),\,\,\,\, y \in \R_+$ is  the solution of the following Cauchy problem: 
		\begin{eqnarray*}
			&(L_{A,x}-L_{A,y})u(x, y)=0,\\& u(x,0)=f(x),\,\,\, u_y(x,0)= 0,\,\,\,\, x>0.
		\end{eqnarray*}
	\end{defn}
	The Ch\'{e}bli-Trim\`{e}che hypergroup associated with the Ch\'{e}bli-Trim\`{e}che function $A$ will be denoted by $(\R_+, *(A)).$ The growth of $(\R_+, *(A))$ is determined by the number $\rho:= \frac{1}{2} \lim\limits_{x \rightarrow \infty} \frac{A'(x)}{A(x)}.$ We say that $(\R_+, *(A))$ is of {\it exponential growth} if and only if $\rho >0.$ Otherwise we say that the hypergroup is of {\it subexponential growth} which also includes the polynomial growth. 
	
	
	The class of  Ch\'{e}bli-Trim\`{e}che hypergroups contains many important classes of hypergroups. We will discuss three of them here. 
	\begin{itemize}
		\item[(i)] If $A(x):= x^{2 \alpha+1}$ with $2 \alpha \in \mathbb{N}$ and $\alpha > -\frac{1}{2}$ then $L_{A,x}$ is the radial part of the Laplace operator on the Euclidean space and $(\R_+, *(A))$ is a Bessel-Kingman hypergroup. 
		\item [(ii)] If $A(x):= (sinh\, x)^{2\alpha+1} (cosh\,x)^{2 \beta+1}$ with  $2 \alpha, 2 \beta \in \mathbb{N},\,$ $\alpha \geq \beta \geq -\frac{1}{2}$ and $ \alpha \neq -\frac{1}{2}$ then $L_{A,x}$ is the radial part of the Laplace-Beltrami operator on a noncompact Riemannian symmetric space of rank one (also of Damek-Ricci spaces) and $(\R_+, *(A))$ is a Jacobi hypergroup (\cite{Anker96,Bloom}). 
		\item[(iii)] If $A$ is the density function on the simply connected harmonic manifold $X$ of purely exponential volume growth then $L_{A,x}$ is the radial part of the Laplace-Beltrami operator on $X$ and $(\R_+, *(A))$ is the ``radial hypergroup" of $X$ (see \cite{Biswas}).    
		
	\end{itemize}
	
	The Ch\'{e}bli-Trim\`{e}che hypergroup $(\R_+, *(A))$ is a noncompact commutative hypergroup with the identity element $0$ and involution as the identity map. The Haar measure $m$ on $(\R_+, *(A))$ is given by $m(x):= A(x)\,dx,$ where $dx$ is the usual Lebesgue measure on $\R_+.$ For any $x, y \in \R_+,$ the probability measure $\delta_x*\delta_y$ is absolutely continuous with respect to $m$ and 
	$\spt(\delta_x*\delta_y) \subset [\,|x-y|, x+y].$ For $1 \leq p \leq \infty,$ the Lebesgue space $L^p(\R_+, m)$ on $(\R_+, *(A))$ is defined as usual; we denote by $\|f\|_{p, A}$ the $L^p$-norm of $f \in L^p(\R_+, Adx).$ 
	
	For the Ch\'{e}bli-Trim\`{e}che hypergroup $(\R_+, *(A)),$ the multiplicative functions on $(\R_+, *(A))$ are given by the eigenfunctions of the operator $L:=L_{A,x}$ defined in \eqref{Visht}. For any $\lambda \in \mathbb{C},$ the equation 
	\begin{equation} \label{visht1}
	Lu = (\lambda^2+\rho^2)u
	\end{equation}
	has a unique solution $\phi_{\lambda}$ on $\R_+^*$ which extends continuously to $0$ and satisfies $\phi_{\lambda}(0)=1.$ We would like to point out here that since the coefficient $A'/A$ of $L$ is singular at $x =0$ as $A(0)=0,$ the existence of a solution continuous at $0$ is not immediate. Since \eqref{visht1} reads the same for $\lambda$ and $-\lambda,$ by uniqueness we have $\phi_{\lambda}= \phi_{-\lambda}.$
	The multiplicative functions, i.e., semicharacters, on $(\R_+, *(A))$  are then exactly the functions $\phi_{\lambda},\,\, \lambda \in \mathbb{C}.$ But a semicharacter  $\phi_{\lambda}$ is bounded, and therefore becomes a character, if and only if $|\text{Im} \lambda| \leq \rho.$ Since the involution of $(\R_+, *(A))$ is just the identity map it follows that the characters $\phi_\lambda$ of the hypergroup $(\R_+, *(A))$ are real-valued which is the case for every $\lambda \in \R \cup i \R.$ Therefore, in  view of above discussion, the dual space $\widehat{\R}_+$ of $(\R_+, *(A))$ is described by $\{\phi_{\lambda}: \lambda \in [0, \infty) \cup  [0, i \rho] \}$ which can be identified with the parameter set $\R_+ \cup [0, i \rho].$ 
	
	We define the Fourier transform $\widehat{f}$ of $f \in L^1(\R_+, Adx)$ at a point $\lambda \in \widehat{\R}_+$ by 
	$$\widehat{f}(\lambda):= \int_{0}^\infty f(x)\, \phi_{\lambda}(x)\, A(x)dx.$$
	It is worth noting that the above Fourier transform turns to be Jacobi transform \cite{Koorn} when the hypergroup arises from convolution of radial measures on a rank one symmetric space, or, equivalently, $A(x):= (sinh\, x)^{2\alpha+1} (cosh\,x)^{2 \beta+1}$ with $\alpha \geq \beta \geq -\frac{1}{2}$ and $ \alpha \neq -\frac{1}{2}.$
	The following theorem is the Levitan-Plancherel theorem for $(\R_+, *(A)).$
	\begin{thm} \cite[Theorem 2.1]{Bloom}
		There exists a unique non-negative measure $\pi$ on $\widehat{\R}_+$ with support $[0, \infty)$ such that the Fourier transform induces an isometric isomorphism from $L^2(\R_+, Adx)$ onto $L^2(\widehat{\R}_+, \pi)$ and for any $f \in L^1(\R_+, Adx) \cap L^2(\R_+, Adx)$, 
		$$\int_0^\infty |f(x)|^2 \, A(x)dx = \int_{\widehat{\R}_+}\,|\widehat{f}(\lambda)|^2\, d\pi(\lambda).$$
	\end{thm}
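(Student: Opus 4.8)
The statement is the eigenfunction expansion (spectral) theorem for the singular Sturm--Liouville operator $L=L_{A,x}$; it is quoted verbatim from \cite[Theorem 2.1]{Bloom}, so one could simply cite that source, but here is how I would argue it. An alternative entry point is the general Plancherel theorem for commutative hypergroups, which already furnishes a Plancherel measure $\pi$ on its support $\mathcal{S}\subseteq\widehat{\R}_+$; then only the identification $\mathcal{S}=[0,\infty)$ would remain, and that is again achieved by the spectral analysis sketched below.

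First I would reduce to a Schr\"odinger operator: the multiplication map $Uf:=A^{1/2}f$ is a unitary isomorphism from $L^2(\R_+,Adx)$ onto $L^2(\R_+,dx)$, and a direct computation gives $ULU^{-1}=\widetilde L:=-\frac{d^2}{dx^2}+q$ with $q=\frac{(A^{1/2})''}{A^{1/2}}=\tfrac14\bigl(\tfrac{A'}{A}\bigr)^2+\tfrac12\bigl(\tfrac{A'}{A}\bigr)'$. Properties (iii)--(iv) of $A$ then give $q(x)\to\rho^2$ as $x\to\infty$ and a Bessel-type inverse-square singularity $q(x)=\frac{\alpha^2-1/4}{x^2}+(\textnormal{bounded})$ near $0$. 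Next I would fix a self-adjoint realization: by Weyl's alternative $\widetilde L$ is in the limit-point case at $+\infty$ (the potential being bounded there), while at $0$ the Weyl classification depends on $\alpha$, and in the limit-circle range one selects the Friedrichs extension, equivalently the boundary condition satisfied by the solution behaving like $x^{\alpha+1/2}$ at $0$. Transporting back by $U^{-1}$ produces a distinguished self-adjoint operator on $L^2(\R_+,Adx)$ extending $L$ from the smooth compactly supported functions; the key structural point is that for each $\lambda\in\C$ the function $\phi_\lambda$ from \eqref{visht1} is, up to a scalar, the unique solution of $Lu=(\lambda^2+\rho^2)u$ lying in the domain-defining class at $0$, pinned down by $\phi_\lambda(0)=1$, and $A^{1/2}\phi_\lambda$ is the corresponding solution for $\widetilde L$.

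Then I would invoke the Weyl--Titchmarsh--Kodaira eigenfunction expansion theorem for this realization. It yields a nonnegative Borel measure which, transported to the spectral parameter $\lambda$ via $\mu=\lambda^2+\rho^2$, is the desired $\pi$, such that the map $f\mapsto\widehat f(\lambda)=\int_0^\infty f(x)\phi_\lambda(x)A(x)\,dx$ (defined first for $f\in C_c(\R_+)$) extends to a unitary isomorphism $L^2(\R_+,Adx)\to L^2(\widehat{\R}_+,\pi)$ satisfying the Parseval identity; the case $f\in L^1\cap L^2$ follows by the usual density argument, and uniqueness of $\pi$ is part of Kodaira's theorem (it also follows from injectivity of the Fourier transform together with the intertwining $\widehat{Lf}=\mu\,\widehat f$). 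Finally I would determine the support: since $q(x)\to\rho^2$, a Sturm comparison (or an analysis of the Weyl $m$-function) shows the absolutely continuous spectrum of $\widetilde L$ is exactly $[\rho^2,\infty)$, so $\pi$ lives on the part of $\widehat{\R}_+$ parametrized by $\lambda\in[0,\infty)$; it then remains to exclude eigenvalues of the realization below $\rho^2$, i.e. atoms of $\pi$ on the exceptional arc $[0,i\rho]$. This is where the monotonicity hypothesis (iii) on $A'/A$ is genuinely used: it forces the Wronskian of $\phi_\lambda$ with the $L^2$ Weyl solution not to vanish for $\lambda\in(0,i\rho]$, so the resolvent kernel has no poles there and $\spt\pi=[0,\infty)$.

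The main obstacle I anticipate is precisely this last step together with the limit-circle analysis: correctly fixing the self-adjoint extension at the singular endpoint $0$, and then proving that the resulting spectral measure carries no mass on $[0,i\rho]$, so that $\spt\pi=[0,\infty)$. Both points are classical for this class of operators, and complete treatments can be found in the references on Ch\'{e}bli-Trim\`{e}che hypergroups cited in Section \ref{sec4.1}.
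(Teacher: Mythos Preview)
The paper does not prove this theorem; it is stated with an external citation and used as a black box throughout Section~\ref{sec4}. So there is no ``paper's approach'' against which to compare your proposal.

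That said, your sketch is a correct outline of the classical route by which this result is actually established in the sources the paper cites (Ch\'ebli, Trim\`eche, Bloom--Heyer, Bloom--Xu): the Liouville transformation $Uf=A^{1/2}f$ reducing $L$ to a Schr\"odinger operator $-\tfrac{d^2}{dx^2}+q$ on $L^2(\R_+,dx)$, the Weyl limit-point/limit-circle analysis at the two endpoints, and then the Weyl--Titchmarsh--Kodaira eigenfunction expansion. You also correctly mention the alternative entry via the abstract Plancherel theorem for commutative hypergroups (this is exactly how the paper phrases it in Section~\ref{Jacobinota}), which reduces the question to identifying $\spt\pi$. The two points you flag as the main obstacles---fixing the correct self-adjoint extension at $0$ when $\alpha$ is in the limit-circle range, and showing that the monotonicity hypothesis (iii) on $A'/A$ rules out discrete spectrum on $(0,i\rho]$---are indeed the substantive steps in those references, and your Wronskian/resolvent-pole description of the second step is the standard mechanism. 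In short, nothing is missing from your outline; it matches the literature proof that the paper is importing.
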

	
	In addition to the above theorem, we also have the following identity known as Parseval's identity. For $f_1, f_2 \in L^2(\mathbb{R}_+, Adx)$ we have 
	\begin{equation*}
	    \int_0^\infty f_1(x)\, f_1(x)\, A(x)dx = \int_{\widehat{\R}_+} \widehat{f}_1(\lambda) \, \widehat{f}_2(\lambda)\, d\pi(\lambda).
	\end{equation*}

	{\bf Condition (P):} We say that a function $f$ satisfies {\it Condition (P)} if for some $a>0,$ $f$ can be expressed as 
	$$ f(x)=\frac{a^2-\frac{1}{4}}{x^2}+\zeta(x)$$ for all large $x$, where 
	$$ \int_{x_0}^\infty x^{\gamma(a)} |\zeta(x)|\, dx<\infty$$ for some $x_0>0$ and $\zeta(x)$ is bounded for $x>x_0;$ here $\gamma(a)= a+\frac{1}{2}$ if $a \geq \frac{1}{2}$ and $\gamma(a)=1$ otherwise. 
	
	Now consider the function $G$ defined by 
	\begin{equation}
	G(x):= \frac{1}{4} \left( \frac{A'(x)}{A(x)} \right)^2+ \frac{1}{2} \frac{d}{dx} \left( \frac{A'(x)}{A(x)}\right) -\rho^2.
	\end{equation}
	It is possible to determine the Plancherel measure $\pi$ explicitly. In fact, Bloom and Xu \cite{BX} determined it by placing an extra growth condition  on $A$ described as follows:
	
	\begin{thm} \cite[Proposition 3.17]{BX} \label{bxthe}
		If the function $G$ defined above satisfies Condition (P) with any of the following conditions:
		\begin{itemize}
			\item [(i)] $a>\frac{1}{2};$ 
			\item [(ii)] $a \neq |\alpha|,$ where $\alpha$ is the constant appearing in the definition of Ch\'ebli-Trim\'eche  function. 
			\item [(iii)] $a=\alpha \leq \frac{1}{2}$ and $\int_0^\infty x^{\frac{1}{2} -\alpha} \zeta(x) \phi_0(x) A(x)^{\frac{1}{2}} dx \neq -2 \alpha \sqrt{M_A}$ or\\ $\int_0^\infty x^{\frac{1}{2} +\alpha} \zeta(x) \phi_0(x) A(x)^{\frac{1}{2}} dx=0$ where $M_A= \lim\limits_{x \rightarrow 0^+} x^{-2 \alpha -1} A(x)$ and\\ $\zeta(x)= G(x)+ (\frac{1}{4}-a^2)/x^2,$
		\end{itemize} then the Plancherel measure $\pi$ is absolutely continuous with respect to the Lebesgue measure and has density $|c(\lambda)|^2,$ where the function $c$ satisfies the following:  there exist positive constants $C_1, C_2$ and $K$ such that for any $\lambda \in \mathbb{C}$ with $\textnormal{Im}(\lambda) \leq 0,$
		$$C_1 |\lambda|^{a+\frac{1}{2}} \leq |c(\lambda)|^{-1} \leq C_2 |\lambda|^{a+\frac{1}{2}}\,\,\,\,\, \text{for}\quad |\lambda| \leq K,$$
		$$C_1 |\lambda|^{\alpha+\frac{1}{2}} \leq |c(\lambda)|^{-1} \leq C_2 |\lambda|^{\alpha+\frac{1}{2}}\,\,\,\,\, \text{for}\quad |\lambda| > K.$$
	\end{thm}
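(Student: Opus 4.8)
This is an assertion about the Harish-Chandra $c$-function of a singular Sturm--Liouville operator, so the plan is to pass to the \emph{Liouville normal form} of $L$ and then run a Jost-solution/connection-coefficient analysis whose model equations are Bessel's equation of index $\alpha$ near $x=0$ and of index $a$ near $x=\infty$.

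First I would make the Liouville substitution $v(x)=A(x)^{1/2}u(x)$: if $u$ solves $Lu=(\lambda^{2}+\rho^{2})u$, then a direct computation (using $\tfrac{d}{dx}(A'/A)=A''/A-(A'/A)^{2}$) shows that $v$ solves the Schr\"odinger equation $-v''+G(x)\,v=\lambda^{2}v$ with exactly the potential $G$ defined just above the statement. Then I would record the two endpoint profiles of $G$. The local behaviour of $A$ at $0$ (property (iv) of the Ch\'{e}bli-Trim\`{e}che function, i.e.\ $A\sim\textnormal{const}\cdot x^{2\alpha+1}$) forces $G(x)=\dfrac{\alpha^{2}-\tfrac14}{x^{2}}+g_{0}(x)$ near $0$ with $g_{0}$ continuous and less singular, so the singularity at $0$ is Bessel of index $\alpha$; Condition~(P) applied to $G$ gives $G(x)=\dfrac{a^{2}-\tfrac14}{x^{2}}+\zeta(x)$ near $\infty$ with $\zeta$ bounded and $x^{\gamma(a)}\zeta\in L^{1}$, so the tail at $\infty$ is an integrably perturbed Bessel tail of index $a$. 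The operator $-\partial_x^{2}+G$ is limit-point at $\infty$; at $0$ the relevant boundary condition is the one selecting the solution $\sim x^{\alpha+1/2}$, which in the original variable is exactly $\phi_\lambda(0)=1$.

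Next I would construct the endpoint solutions and define $c(\lambda)$. Near $0$: a Frobenius/Volterra argument for the perturbed Bessel equation (absorbing $g_{0}v$ into a Volterra integral equation against the Bessel kernel) yields the regular solution $v_{\lambda,0}(x)\sim\textnormal{const}\cdot x^{\alpha+1/2}$, entire in $\lambda$, which up to the fixed normalisation equals $A^{1/2}\phi_\lambda$. Near $\infty$: for $\lambda\in\C$, $\lambda\neq0$, $\textnormal{Im}\,\lambda\le0$, I would build Jost-type solutions $\Psi_{\pm}(\lambda,\cdot)$ modelled on the Hankel solutions $\sqrt{\lambda x}\,H^{(1)}_{a}(\lambda x)$, $\sqrt{\lambda x}\,H^{(2)}_{a}(\lambda x)$ of the limiting equation $-w''+\tfrac{a^{2}-1/4}{x^{2}}w=\lambda^{2}w$, with $\Psi_{\pm}(\lambda,x)\sim\textnormal{const}\cdot e^{\pm i\lambda x}$ as $x\to\infty$; existence and these asymptotics come from a Volterra integral equation whose kernel is controlled by tails of the type $\int_x^{\infty}|\zeta|$, and this is precisely where the integrability (with the weight $x^{\gamma(a)}$) built into Condition~(P) is used. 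Then $c(\lambda)$ is defined, up to the chosen normalisation, by $A^{1/2}\phi_\lambda\sim c(\lambda)\Psi_{+}(\lambda,\cdot)+c(-\lambda)\Psi_{-}(\lambda,\cdot)$ as $x\to\infty$, equivalently as a Wronskian of $v_{\lambda,0}$ with a Jost solution; the absolute continuity of $\pi$ and the Plancherel-density assertion then follow from the Weyl--Kodaira spectral theorem for this Sturm--Liouville problem, once $c$ is shown to be continuous and non-vanishing on $(0,\infty)$ with no $L^{2}$-eigenfunction present.

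Finally, the size estimates would come from a scaling argument $x\mapsto|\lambda|x$. For $|\lambda|>K$ the rescaled potential is dominated near $0$ by the index-$\alpha$ Bessel singularity, so $c(\lambda)$ coincides, to leading order in $|\lambda|$, with the $c$-function of the \emph{pure} Bessel operator of index $\alpha$, for which $|c(\lambda)|^{-1}\asymp|\lambda|^{\alpha+1/2}$, the remainder $g_{0}$ affecting only the constants after rescaling; for $|\lambda|\le K$ the same argument is run at the $\infty$-end against the index-$a$ Bessel tail, giving $|c(\lambda)|^{-1}\asymp|\lambda|^{a+1/2}$. The upper bounds need only the constructions above; the lower bounds require that the leading Bessel coefficient not vanish at the relevant threshold, and the three alternative hypotheses (i)--(iii) are exactly the non-resonance conditions guaranteeing this (the case $a=\alpha\le\tfrac12$ being the borderline one where an explicit non-degeneracy integral is needed). \textbf{The hard part} will be uniformity: controlling the Volterra iterations simultaneously for all $\lambda$ with $\textnormal{Im}\,\lambda\le0$ --- in particular handling the degeneration of the oscillatory factor as $\lambda\to0$, where the Bessel (not the free) model must be used --- and then splicing the two Bessel regimes so that the scaling argument delivers \emph{both} the upper and the lower bound on $|c(\lambda)|^{-1}$ with constants independent of $\lambda$; the non-vanishing of the threshold coefficient, where hypotheses (i)--(iii) are indispensable, is the real crux.
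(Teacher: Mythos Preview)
The paper does not prove this statement. Theorem~\ref{bxthe} is quoted verbatim from Bloom and Xu \cite[Proposition~3.17]{BX} as a piece of background used only to justify that the Plancherel density $|c(\lambda)|^{-2}$ obeys the two-regime power bounds needed later in Theorem~\ref{specmul}. No argument whatsoever is given in the present paper; the authors simply import the result and the constants $K,\alpha,a$ from the cited reference.

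Your sketch is a credible outline of the standard Liouville--Jost--Bessel machinery that underlies results of this type (and is, in broad strokes, what Bloom and Xu actually carry out in \cite{BX}): the Liouville substitution $v=A^{1/2}u$ reducing $L$ to a Schr\"odinger operator with potential $G$, the identification of the two Bessel regimes of index $\alpha$ near $0$ and index $a$ near $\infty$ via Condition~(P), the construction of Jost solutions and the definition of $c(\lambda)$ as a connection coefficient/Wronskian, and the scaling argument for the asymptotics. You are also right that the alternative hypotheses (i)--(iii) function as non-resonance conditions at the threshold $\lambda=0$. So nothing in your plan is wrong, but be aware that there is no proof in this paper to compare it to; if you want to check your argument against a written source, you must go to \cite{BX} itself.
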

	In the sequel we always assume that the function $A$ (and so $G$) satisfies the Condition (P) of Theorem \ref{bxthe} for $\alpha \geq 0$. Therefore, $d\pi = C_0|c(\lambda)|^{-2} d\lambda,$ where $C_0$ is a positive constant and $c$ is a certain complex function on $\mathbb{C} \backslash \{0\}.$ We also assume that if case $\rho=0,$ $A$ also satisfies $A(x)=O(x^{2\alpha+1}),$ as $ x \rightarrow \infty.$
	
	
	Note that the space of infinitely differentiable and compactly supported functions on $\R_+,$ denoted by, $C_c^\infty(\R_+)$ is the space of all even $f \in C_c^\infty(\R)$  restricted to $\R_+.$
	
	We have the following Fourier inversion formula, for $f \in C_c^\infty(\R_+),$
	\begin{equation*}
	f(x)= C_0 \int_0^\infty \widehat{f}(\lambda) \phi_{\lambda}(x)\, |c(\lambda)|^{-2}\, d\lambda.
	\end{equation*}

For any $\lambda \in \mathbb{C},$ $\phi_\lambda$ is an even $C^\infty$-function and $\lambda \mapsto \phi_\lambda(x)$ is analytic. Also, $|\phi_{\lambda}(x)| \leq 1$ for all $x \in \R_+, \lambda \in \mathbb{C}$ with $|\textnormal{Im}\,\lambda| \leq \rho.$ For each $\lambda \in \mathbb{C},$ $\phi_{\lambda}$ has a Laplace representation $$\phi_{\lambda}(x)= \int_{-x}^{x} e^{(i \lambda-\rho)t} d\nu_x(t)\,\,\,\,\text{for}\,\, x \in \R_+,$$ 
	where $\nu_x$ is a probability measure on $\R$ supported in $[-x,x].$ Further, if $\lambda \in \mathbb{C}$ then $|\phi_{\lambda}| \leq C (1+x) e^{(Im \, \lambda-\rho)x}.$

%
	

\subsection{$L^p$-$L^q$ boundedness of multipliers on Ch\'{e}bli-Trim\`{e}che hypergroups}
The following theorem is an analogue of the Paley inequality for Ch\'{e}bli-Trim\`{e}che hypergroups on half line.

\begin{thm}
	      Suppose that $\psi$ is a positive function  on $\mathbb{R}_+$  satisfying the condition 
	    \begin{equation} \nonumber
	        M_\psi := \sup_{t>0} t \int_{\underset{\psi(\lambda)\geq t}{\lambda \in \mathbb{R}_+}} |c(\lambda)|^{-2}\, d\lambda <\infty.
	        \end{equation}
	        Then for  $f \in L^p(\mathbb{R}_+, Adx),$ $1<p\leq 2,$ we have 
	        \begin{align*}
	            \left( \int_0^\infty |\widehat{f}(\lambda)|^p\, \psi(\lambda)^{2-p} |c(\lambda)|^{-2}\, d\lambda \right)^{\frac{1}{p}} \lesssim M_{\psi}^{\frac{2-p}{p}}\, \|f\|_{L^p(\mathbb{R}_+, Adx)}.
	        \end{align*}
	\end{thm}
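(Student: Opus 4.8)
The plan is to deduce this statement directly from Theorem \ref{Paley} by specializing it to the Ch\'ebli-Trim\`eche hypergroup $H=(\mathbb{R}_+, *(A))$. Recall from Section \ref{sec4.1} that $(\mathbb{R}_+, *(A))$ is a noncompact commutative hypergroup whose Haar measure is $d\lambda = A(x)\,dx$, whose dual space $\widehat{\mathbb{R}}_+$ can be identified with $\mathbb{R}_+ \cup [0, i\rho]$, and whose Plancherel measure $\pi$ is supported on $[0,\infty)$ with $d\pi(\lambda) = C_0 |c(\lambda)|^{-2}\,d\lambda$ for a positive constant $C_0$. In particular, every integral over $\widehat{H}$ appearing in Theorem \ref{Paley} reduces to an integral over $\mathbb{R}_+$ against $C_0|c(\lambda)|^{-2}\,d\lambda$ (the extra piece $[0,i\rho]$ of $\widehat{\mathbb{R}}_+$ carries no Plancherel mass since $\spt \pi = [0,\infty)$), and the Fourier transform $\widehat{f}$ of Theorem \ref{Paley} is precisely the Ch\'ebli-Trim\`eche--Fourier transform $\widehat{f}(\lambda) = \int_0^\infty f(x)\,\phi_\lambda(x)\, A(x)\,dx$.

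First I would observe that the hypothesis $M_\psi = \sup_{t>0} t\int_{\{\lambda\in\mathbb{R}_+:\,\psi(\lambda)\geq t\}} |c(\lambda)|^{-2}\,d\lambda < \infty$ is, up to the multiplicative constant $C_0$, the hypothesis of Theorem \ref{Paley}: viewing $\psi$ as a positive function on $\widehat{H}$ and using $\spt\pi=[0,\infty)$, one has $\sup_{t>0} t\int_{\{\chi \in \widehat{H}:\, \psi(\chi)\geq t\}} d\pi(\chi) = C_0 M_\psi < \infty$. Then Theorem \ref{Paley} applies verbatim and yields, for $f \in L^p(\mathbb{R}_+, A\,dx)$ with $1<p\leq 2$,
\[
\left(\int_{\widehat{\mathbb{R}}_+} |\widehat{f}(\lambda)|^p\, \psi(\lambda)^{2-p}\,d\pi(\lambda)\right)^{\frac{1}{p}} \lesssim (C_0 M_\psi)^{\frac{2-p}{p}}\, \|f\|_{L^p(\mathbb{R}_+,\, A\,dx)}.
\]
Substituting $d\pi(\lambda) = C_0|c(\lambda)|^{-2}\,d\lambda$ on the left-hand side, restricting the domain of integration to $[0,\infty)$, and absorbing all powers of $C_0$ into the implicit constant gives exactly the claimed inequality.

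Since the argument is purely a translation of the general result of Theorem \ref{Paley} into its concrete realization on $(\mathbb{R}_+, *(A))$, there is no genuine obstacle; the only point requiring care is the bookkeeping of the constant $C_0$ and the identification of the effective spectral domain with $\mathbb{R}_+$. Alternatively, one could reprove the statement from scratch by repeating the Marcinkiewicz interpolation argument of Theorem \ref{Paley} with the explicit measure $|c(\lambda)|^{-2}\,d\lambda$ in place of $d\pi$, invoking the Levitan--Plancherel theorem for $(\mathbb{R}_+, *(A))$ for the weak-$(2,2)$ estimate and the bound $|\widehat{f}(\lambda)| \leq \|f\|_{L^1(\mathbb{R}_+, A\,dx)}$ (valid since $|\phi_\lambda|\leq 1$ on $\spt\pi$) for the weak-$(1,1)$ estimate; but appealing to Theorem \ref{Paley} directly is cleaner.
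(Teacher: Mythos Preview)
Your proposal is correct and is exactly the approach the paper takes: the statement is presented in Section~\ref{sec4} without a separate proof, as a direct specialization of Theorem~\ref{Paley} to the Ch\'ebli--Trim\`eche hypergroup $(\mathbb{R}_+,*(A))$ with $d\pi(\lambda)=C_0|c(\lambda)|^{-2}\,d\lambda$. Your handling of the constant $C_0$ and the identification of the support of $\pi$ with $[0,\infty)$ are the only points of substance, and you address them correctly.
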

The following Hausdorff-Young-Paley inequality holds for the Ch\'{e}bli-Trim\`{e}che hypergroups. 
\begin{thm}  Let $1<p\leq 2,$ and let   $1<p \leq b \leq p' \leq \infty,$ where $p'= \frac{p}{p-1}.$ If $\psi(\lambda)$ is a positive function on $\mathbb{R}_+$ such that 
 \begin{equation}
	        M_\psi := \sup_{t>0} t \int_{\underset{\psi(\lambda)\geq t}{\lambda \in \mathbb{R}_+}} |c(\lambda)|^{-2}\, d\lambda
	        \end{equation}
is finite, then for every $f \in L^p(\mathbb{R}_+, Adx)$ 
 we have
\begin{equation} 
    \left( \int_{\mathbb{R}_+}  \left( |\widehat{f}(\lambda)| \psi(\lambda)^{\frac{1}{b}-\frac{1}{p'}} \right)^b |c(\lambda)|^{-2}\, d\lambda  \right)^{\frac{1}{b}} \lesssim M_\varphi^{\frac{1}{b}-\frac{1}{p'}} \|f\|_{L^p(\mathbb{R}_+, Adx)}.
\end{equation}
\end{thm}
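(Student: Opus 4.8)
The plan is to obtain this inequality as a direct specialization of the general Hausdorff--Young--Paley inequality for commutative hypergroups, Theorem \ref{HYP}, to the Ch\'ebli--Trim\`eche hypergroup $H=(\R_+,*(A))$. The preliminary bookkeeping, all recorded in Section \ref{sec4.1}, is this: $(\R_+,*(A))$ is a commutative hypergroup; its Haar measure is $d\lambda=A(x)\,dx$, so $L^p(H,d\lambda)=L^p(\R_+,A\,dx)$; and by the Levitan--Plancherel theorem together with Theorem \ref{bxthe} the Plancherel measure on the dual $\widehat{\R}_+=\R_+\cup[0,i\rho]$ is $d\pi=C_0|c(\lambda)|^{-2}\,d\lambda$ and is supported on $[0,\infty)$. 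In particular $\pi$ gives no mass to the segment $[0,i\rho]$, so a positive function $\psi$ on $\R_+$ may be regarded (after any positive extension across $[0,i\rho]$) as a positive function on $\widehat{H}$, and the quantity $\sup_{t>0}t\int_{\psi(\lambda)\ge t}|c(\lambda)|^{-2}\,d\lambda$ agrees, up to the fixed constant $C_0$, with the $M_\psi$ of Theorem \ref{HYP}.

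With these identifications, I would apply Theorem \ref{HYP} verbatim: for $1<p\le 2$ and $1<p\le b\le p'\le\infty$, finiteness of $M_\psi$ gives
$$\left(\int_{\widehat{H}}\big(|\widehat f(\chi)|\,\psi(\chi)^{\frac1b-\frac1{p'}}\big)^b\,d\pi(\chi)\right)^{\frac1b}\lesssim M_\psi^{\frac1b-\frac1{p'}}\,\|f\|_{L^p(H,d\lambda)}.$$
Substituting $d\pi=C_0|c(\lambda)|^{-2}\,d\lambda$ and $L^p(H,d\lambda)=L^p(\R_+,A\,dx)$, and absorbing the powers of $C_0$ into the implicit constant, yields precisely the asserted estimate. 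Equivalently, one can reprove it from scratch in this concrete model by interpolating the Ch\'ebli--Trim\`eche Paley inequality (the theorem stated immediately above) with the Hausdorff--Young inequality, Theorem \ref{HY}: the map $Af=\widehat f$ is bounded from $L^p(\R_+,A\,dx)$ into $L^p$ of $\psi^{2-p}|c|^{-2}\,d\lambda$ and into $L^{p'}$ of $|c|^{-2}\,d\lambda$, and real interpolation (Corollary \ref{interpolationoperator}) between these endpoints with $\frac1b=\frac{1-\theta}{p}+\frac{\theta}{p'}$ produces the weight $\psi^{\,b(\frac1b-\frac1{p'})}$ and the stated bound.

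There is essentially no obstacle here: the mathematical content lies entirely in Theorem \ref{HYP} (and, through it, in Theorem \ref{Paley}), so the remaining work is purely the identification of the Haar measure, the explicit Plancherel density $C_0|c(\lambda)|^{-2}$, and the harmless constant $C_0$. The one point deserving an explicit sentence is that $\pi$ is concentrated on $[0,\infty)$, which is why it is legitimate to state both the hypothesis and the conclusion with $\psi$ a function on $\R_+$ alone.
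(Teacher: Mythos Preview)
Your proposal is correct and matches the paper's approach exactly: the paper states this theorem without a separate proof, treating it as the direct specialization of Theorem \ref{HYP} to the Ch\'ebli--Trim\`eche hypergroup $(\R_+,*(A))$ with Haar measure $A\,dx$ and Plancherel measure $C_0|c(\lambda)|^{-2}\,d\lambda$. Your additional remarks about $\pi$ being supported on $[0,\infty)$ and about absorbing $C_0$ into the implicit constant are helpful clarifications that the paper leaves tacit.
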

Next, we establish the following $L^p$-$L^q$ boundedness result for Fourier multipliers on Ch\'{e}bli-Trim\`{e}che hypergroups.
\begin{thm} \label{pqF} Let $1<p \leq 2 \leq q<\infty$ and suppose that $T$ is a  Fourier multiplier with symbol $h.$ Then we have 
$$\|T\|_{L^p(\mathbb{R}_+, Adx) \rightarrow L^q(\mathbb{R}_+, Adx)}\lesssim \sup_{s>0} s \left[ \int_{\{ \lambda \in \mathbb{R}_+: |h(\lambda)|\geq s\}} |c(\lambda)|^{-2}\, d\lambda \right]^{\frac{1}{p}-\frac{1}{q}}.$$
   \end{thm}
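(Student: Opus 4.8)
The plan is to obtain Theorem \ref{pqF} as a direct corollary of Theorem \ref{Jacobimult}, specialized to the Ch\'{e}bli-Trim\`{e}che hypergroup $(\mathbb{R}_+, *(A))$. As recalled in Section \ref{sec4.1}, this hypergroup is commutative (in fact hermitian), its Haar measure is $d\lambda = A(x)\,dx$, its dual $\widehat{\mathbb{R}}_+$ is identified with $\mathbb{R}_+ \cup [0, i\rho]$, and under Condition (P) the Plancherel measure takes the explicit form $d\pi = C_0\, |c(\lambda)|^{-2}\, d\lambda$ with support $[0,\infty)$, where $C_0 > 0$ is a fixed constant and $c$ is the $c$-function.

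First I would apply Theorem \ref{Jacobimult} with $H = (\mathbb{R}_+, *(A))$; since this is a commutative hypergroup, the theorem gives
$$\|T\|_{L^p(\mathbb{R}_+, Adx) \rightarrow L^q(\mathbb{R}_+, Adx)} \lesssim \sup_{s>0} s\left[\int_{\{\lambda \in \widehat{\mathbb{R}}_+ :\, |h(\lambda)| \geq s\}} d\pi(\lambda)\right]^{\frac{1}{p} - \frac{1}{q}}.$$
Next I would replace $d\pi$ by $C_0\,|c(\lambda)|^{-2}\,d\lambda$ and observe that, because $\pi$ is supported on $[0,\infty)$, the level-set integral over $\{\lambda \in \widehat{\mathbb{R}}_+ : |h(\lambda)| \geq s\}$ equals the integral over $\{\lambda \in \mathbb{R}_+ : |h(\lambda)| \geq s\}$. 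Absorbing the harmless factor $C_0^{1/p - 1/q}$ into the implicit constant then yields the stated inequality.

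There is no genuine obstacle here: all the analytic input --- the Paley inequality (Theorem \ref{Paley}), the Hausdorff-Young-Paley inequality (Theorem \ref{HYP}), the Hausdorff-Young inequality, and the Plancherel theorem --- was already proved at the level of arbitrary commutative hypergroups in Section \ref{sec3}, and the Ch\'{e}bli-Trim\`{e}che hypergroup is merely a concrete member of that class. The only point demanding even minor attention is bookkeeping: correctly tracking the constant $C_0$ in the identity $d\pi = C_0\,|c(\lambda)|^{-2}\,d\lambda$ and using the identification of $\widehat{\mathbb{R}}_+$ with the support $[0,\infty)$ of the Plancherel measure so that the domains of integration in the two formulations coincide.
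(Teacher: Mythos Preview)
Your proposal is correct and matches the paper's approach exactly: Theorem \ref{pqF} is stated without a separate proof because it is simply the specialization of Theorem \ref{Jacobimult} to the commutative Ch\'{e}bli-Trim\`{e}che hypergroup $(\mathbb{R}_+,*(A))$, using $d\lambda = A(x)\,dx$ and $d\pi = C_0|c(\lambda)|^{-2}\,d\lambda$ supported on $[0,\infty)$. Your bookkeeping remarks about the constant $C_0$ and the support identification are precisely the only points to note.
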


\begin{rem}
It is well-known that it is necessary for a Fourier multiplier $T_h$ to be  bounded from $L^p(\mathbb{R}_+, Adx)$ to $L^p(\mathbb{R}_+, Adx),$ that the symbol $h$ must be holomorphic in a strip in the complex plane (see \cite{Anker96, BX}). It is evident from our theorem, it is no longer a necessary condition for $T_h$ to bounded from $L^p$ to $L^q,$ for $1<p \leq 2 \leq q<\infty.$ 
\end{rem}



\subsection{Spectral multipliers of the generalised radial Laplacian}
Now, we apply Theorem \ref{Jacobimult} to prove the $L^p$-$L^q$ boundedness of spectral multipliers for operator $L:=L_{A, x}.$ If $\varphi \in L^\infty(\R_+, Adx),$ the spectral multiplier $\varphi(L),$ defined by $\varphi$ coincides with the Ch\'{e}bli-Trim\`{e}che Fourier multiplier $T_h$ with $h(\lambda)= \varphi(\lambda^2+\rho^2)$ for $\lambda \in \R_+.$   The $L^p$-boundedness of spectral multipliers  has been proved by several authors in many different setting, e.g., Bessel transform \cite{BCC}, Dunkl harmonic oscillator \cite{Wro}, multidimesional Hankel transform \cite{DPW}. The $L^p$-$L^q$ boundedness of spectral multipliers for compact Lie groups, Heisenberg groups, graded Lie groups have been proved by R. Akylzhanov and the second author \cite{AR}. M. Chatzakou and the first author recently studied $L^p$-$L^q$ boundedness of spectral multiplier for the anharmonic oscillator \cite{CK} (see also \cite{CKNR}). Very recently, the authors \cite{KR21}  studied spectral multipliers of Jacobi Laplacian.

\begin{thm} \label{specmul}
Let $1<p \leq 2 \leq q <\infty$ and let $\varphi$ be a monotonically  decreasing continuous function on $[\rho^2, \infty)$ such that $\lim_{u \rightarrow \infty}\varphi(u)=0.$ Then we have 
\begin{equation}
    \|\varphi(L)\|_{\textnormal{op}} \lesssim \sup_{u>\rho^2} \varphi(u)  \begin{cases} (u-\rho^2)^{(a+1)(\frac{1}{p}-\frac{1}{q})} & \quad \textnormal{if} \quad (u-\rho^2)^{\frac{1}{2}} \leq K, \\ \left[K^{2a+2}- K^{2\alpha+2}+(u-\rho^2)^{(\alpha+1)} \right]^{\frac{1}{p}-\frac{1}{q}} & \quad \textnormal{if} \quad (u-\rho^2)^{\frac{1}{2}}>K,  \end{cases}
\end{equation} where $K$ is a constant appearing in the estimate of the  $c$-function  and $\|\cdot\|_{\textnormal{op}} $ denotes the operator norm from $L^p(\mathbb{R}_+, A dx)$ to $L^q(\mathbb{R}_+, Adx).$ 
\end{thm}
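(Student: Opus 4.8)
The plan is to apply Theorem \ref{pqF} with the explicit multiplier symbol $h(\lambda) = \varphi(\lambda^2 + \rho^2)$ coming from the spectral calculus, reducing the whole statement to estimating the distribution-function quantity
\[
\sup_{s>0}\, s\left[\int_{\{\lambda \in \mathbb{R}_+ :\, |\varphi(\lambda^2+\rho^2)| \ge s\}} |c(\lambda)|^{-2}\, d\lambda\right]^{\frac{1}{p}-\frac{1}{q}}.
\]
First I would record that $\varphi(L) = T_h$ with $h(\lambda) = \varphi(\lambda^2+\rho^2)$, as already noted in the paragraph preceding the statement; this is just the spectral decomposition of $L$ together with $L\phi_\lambda = (\lambda^2+\rho^2)\phi_\lambda$. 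Then, since $\varphi$ is monotonically decreasing and continuous with $\varphi(u)\to 0$, for each $s>0$ the super-level set $\{u \ge \rho^2 : \varphi(u) \ge s\}$ is an interval $[\rho^2, u_s]$ (possibly empty), so the super-level set in $\lambda$ is $\{\lambda : \lambda^2 + \rho^2 \le u_s\} = [0, \sqrt{u_s - \rho^2}\,]$. Here $u_s$ is characterized by $\varphi(u_s) = s$ (using continuity and monotonicity), so that reparametrizing $s = \varphi(u)$ turns the supremum over $s$ into a supremum over $u > \rho^2$.

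Next I would compute $\int_0^{R} |c(\lambda)|^{-2}\, d\lambda$ for $R = \sqrt{u - \rho^2}$ using the two-sided bounds on the $c$-function from Theorem \ref{bxthe}: $|c(\lambda)|^{-2} \asymp |\lambda|^{2a+1}$ for $|\lambda| \le K$ and $|c(\lambda)|^{-2} \asymp |\lambda|^{2\alpha+1}$ for $|\lambda| > K$. Integrating: if $R \le K$ we get $\int_0^R |c(\lambda)|^{-2} d\lambda \asymp R^{2a+2} = (u-\rho^2)^{a+1}$; if $R > K$ we split at $K$ and get $\asymp K^{2a+2} + (R^{2\alpha+2} - K^{2\alpha+2}) = K^{2a+2} - K^{2\alpha+2} + (u-\rho^2)^{\alpha+1}$. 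Raising to the power $\frac1p - \frac1q$ and multiplying by $s = \varphi(u)$ produces exactly the two cases in the statement, with the case distinction $R \le K$ versus $R > K$ translating to $(u-\rho^2)^{1/2} \le K$ versus $(u-\rho^2)^{1/2} > K$. Taking the supremum over $u > \rho^2$ and invoking Theorem \ref{pqF} finishes the argument.

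The step I expect to be the main (though still modest) obstacle is the bookkeeping in the reparametrization $s \leftrightarrow u$: one must be careful that $\varphi$ need not be strictly decreasing or attain every value, so the map $s \mapsto u_s$ should be defined as $u_s = \sup\{u \ge \rho^2 : \varphi(u) \ge s\}$, and one should check that $\varphi(u_s) \ge s$ (right-continuity / continuity of $\varphi$) so that $s \le \varphi(u_s)$ and hence $s\,[\cdots]^{1/p-1/q} \le \varphi(u_s)[\,\int_0^{\sqrt{u_s-\rho^2}}|c|^{-2}\,]^{1/p-1/q}$, which is bounded by the supremum over $u>\rho^2$. The other place requiring a little care is ensuring the constants absorbed in $\lesssim$ are uniform — they depend only on $C_1, C_2, K, a, \alpha, p, q$ from Theorem \ref{bxthe} and the constant $C_0$ in the Plancherel measure, all of which are fixed — and that the degenerate cases (the super-level set empty, or $R$ straddling no mass) are harmless. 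Everything else is a direct substitution into Theorem \ref{pqF}.
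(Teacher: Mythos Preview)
Your proposal is correct and follows essentially the same route as the paper: identify $\varphi(L)$ with the Fourier multiplier $T_h$, $h(\lambda)=\varphi(\lambda^2+\rho^2)$, apply Theorem~\ref{pqF}, use monotonicity of $\varphi$ to rewrite the super-level set as $[0,(u-\rho^2)^{1/2}]$ after the substitution $s=\varphi(u)$, and then integrate $|c(\lambda)|^{-2}$ via the two-regime estimates of Theorem~\ref{bxthe}. Your treatment of the reparametrization $s\leftrightarrow u$ is in fact more careful than the paper's (which simply writes ``$s=\varphi(u)$ for some $u$'' without addressing possible failure of strict monotonicity), so nothing is missing.
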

\begin{proof} Since $\varphi(L)$ is a Fourier multiplier with the symbol $\varphi(\lambda^2+\rho^2),$ as an application of Theorem \ref{Jacobimult}, we get
\begin{align*}
    \|\varphi(L)\|_{\textnormal{op}} & \lesssim \sup_{s>0} s \left[ \int_{ \{ \lambda \in \mathbb{R}_+ :\, \varphi(\lambda^2+\rho^2) \geq s\} } |c(\lambda)|^{-2}\,d\lambda \right]^{\frac{1}{p}-\frac{1}{q}} \\ & = \sup_{0<s<\varphi(\rho^2)} s \left[ \int_{ \{ \lambda \in \mathbb{R}_+ :\, \varphi(\lambda^2+\rho^2) \geq s\} } |c(\lambda)|^{-2}\,d\lambda \right]^{\frac{1}{p}-\frac{1}{q}}, 
\end{align*} since $\varphi \leq \varphi(\rho^2).$ Now, as $s \in (0, \varphi(\rho^2)]$ we can write $s= \varphi(u)$ for some $u \in [\rho^2, \infty)$ and, therefore, we have 
\begin{align*}
    \|\varphi(L)\|_{\textnormal{op}} \lesssim \sup_{\varphi(u) <\varphi(\rho^2)} \varphi(u)  \left[ \int_{ \{ \lambda \in \mathbb{R}_+ :\, \varphi(\lambda^2+\rho^2) \geq \varphi(u)\} } |c(\lambda)|^{-2}\,d\lambda \right]^{\frac{1}{p}-\frac{1}{q}}.
\end{align*} Since $\varphi$ is monotonically decreasing we get 
\begin{align*}
    \|\varphi(L)\|_{\textnormal{op}} \lesssim \sup_{u >\rho^2} \varphi(u) \left[ \int_{ \{ \lambda \in \mathbb{R}_+ :\, \lambda \leq (u-\rho^2)^{\frac{1}{2}}\} } |c(\lambda)|^{-2}\,d\lambda \right]^{\frac{1}{p}-\frac{1}{q}}.
\end{align*}
Now, we use the estimate of the $c$-function (see Theorem \ref{bxthe}) to get 
\begin{align*}
     \|\varphi(L)\|_{\textnormal{op}} &\lesssim  \sup_{u >\rho^2} \varphi(u) \begin{cases}  \left[ \int_0^{ (u-\rho^2)^{\frac{1}{2}} } \lambda^{2a+1} \,d\lambda \right]^{\frac{1}{p}-\frac{1}{q}} \quad &\textnormal{if} \quad   (u-\rho^2)^{\frac{1}{2}} \leq K, \\ \left[ \int_0^K \lambda^{2a+1} d\lambda +\int_K^{ (u-\rho^2)^{\frac{1}{2}} } \lambda^{2\alpha+1}\,d\lambda \right]^{\frac{1}{p}-\frac{1}{q}} \quad &\textnormal{if} \quad  (u-\rho^2)^{\frac{1}{2}}>K
     \end{cases} \\&=\sup_{u>\rho^2} \varphi(u)  \begin{cases} (u-\rho^2)^{(a+1)(\frac{1}{p}-\frac{1}{q})} & \quad \textnormal{if} \quad (u-\rho^2)^{\frac{1}{2}} \leq K, \\ \left[K^{2a+2}- K^{2\alpha+2}+(u-\rho^2)^{(\alpha+1)} \right]^{\frac{1}{p}-\frac{1}{q}} & \quad \textnormal{if} \quad (u-\rho^2)^{\frac{1}{2}}>K,  \end{cases}
\end{align*} proving Theorem \ref{specmul}.
\end{proof}

\subsubsection{Heat equation}
Let us consider the $L$-heat equation 
\begin{equation} \label{23vis}
    \partial_t u+L u=0, \quad u(0)=u_0.
\end{equation} One can verify that for each $t>0,$ $u(t, x)=e^{-t L}u_0$ is a solution of initial value problem \eqref{23vis}. To apply Theorem \ref{specmul} we consider the function $\varphi(u)= e^{-tu}$ which satisfies the condition of Theorem \ref{specmul} and therefore we get 
\begin{equation}
    \|e^{-tL}\|_{\textnormal{op}} \leq \sup_{u>\rho^2} e^{-tu} \begin{cases} (u-\rho^2)^{(a+1)(\frac{1}{p}-\frac{1}{q})} & \quad \textnormal{if} \quad (u-\rho^2)^{\frac{1}{2}} \leq K, \\ \left[K^{2a+2}- K^{2\alpha+2}+(u-\rho^2)^{(\alpha+1)} \right]^{\frac{1}{p}-\frac{1}{q}} & \quad \textnormal{if} \quad (u-\rho^2)^{\frac{1}{2}}>K,  \end{cases}
    \end{equation}
    by setting $s= (u-\rho^2)^{\frac{1}{2}}$ we get 
    \begin{align} \label{eq28}
        \|e^{-tL}\|_{\textnormal{op}} \leq e^{-t \rho^2}\sup_{s>0} e^{-ts^2} \begin{cases} s^{2(a+1)(\frac{1}{p}-\frac{1}{q})} & \quad \textnormal{if} \quad s \leq K, \\ \left[K^{2a+2}- K^{2\alpha+2}+s^{2(\alpha+1)} \right]^{\frac{1}{p}-\frac{1}{q}} & \quad \textnormal{if} \quad s >K,  \end{cases}
    \end{align}
    Now, we will first calculate  $e^{-t \rho^2}\,\,\sup_{s>0} e^{-ts^2} s^{\frac{2(a+1)}{r}}$ where $\frac{1}{r}=\frac{1}{p}-\frac{1}{q}.$ Consider the function $$\phi(s)= e^{-ts^2} s^{\frac{2(a+1)}{r}}.$$
    Then $$\phi'(s)= e^{-ts^2}\left(-2st+\frac{2(a+1)}{r} \right) s^{\frac{2(a+1)}{4}-1}.$$
    So, $\phi$ will be zero only at $s_0= \frac{(a+1)}{rt}$ and derivative changes the sign from positive to negative at $s_0.$ Thus, $s_0$ is point of maximum of $\phi$ if $s_0 \leq K$, i.e., $\frac{(a+1)}{rt} \leq K$ implies that $t \geq \frac{(a+1)}{r K}.$  Thus, 
    \begin{align} \label{eq29}
        e^{-t \rho^2}\,\,\sup_{s>0} e^{-ts^2} s^{\frac{2(a+1)}{r}}= e^{-\frac{(a+1)^2}{ r^2t}} \left(\frac{a+1}{rt} \right)^{\frac{2(a+1)}{r}} \leq  C_{a,r}\,\, e^{-t \rho^2}\,\, e^{-\frac{(a+1)^2}{ r^2t}} t^{\frac{-2(a+1)}{r}}.
    \end{align}
    Now, for $s >K,$ consider the function 
    $\psi(s)= e^{-ts^2} [K^{2a+2}-K^{2\alpha+2}+s^{2(\alpha+1)}]^{\frac{1}{r}}.$ Then $\psi$ attains its maximum at $s_0= \frac{(\alpha+1)}{r t}$ if $s_0>K,$ i.e., $\frac{(\alpha+1)}{r t}>K$ implies that $0<t<\frac{\alpha+1}{rK}.$ So, 
    \begin{align} \label{eq30}
        e^{-t \rho^2}\,\,\sup_{s>0}\, e^{-ts^2} [K^{2a+2}-K^{2\alpha+2}+s^{2(\alpha+1)}]^{\frac{1}{r}}&=e^{-t \rho^2}\,\, e^{- \frac{(\alpha+1)}{r^2 t}} \left[K^{2a+2}-K^{2\alpha+2}+ \left( \frac{(\alpha+1)}{rt}\right)^{2(\alpha+1)} \right]^{\frac{1}{r}} \nonumber \\& \leq e^{-\frac{K}{r}}\left[K^{2a+2}-K^{2\alpha+2}+ \left( \frac{(\alpha+1)}{rt}\right)^{2(\alpha+1)} \right]^{\frac{1}{r}} \nonumber \\&\leq C_{\alpha, r, K}\, t^{-\frac{2(\alpha+1)}{r}}.   
    \end{align}
    Next, by putting the estimates from \eqref{eq29} and \eqref{eq30} in \eqref{eq28} we obtain
    \begin{align} \label{eq31}
        \|e^{-tL}\|_{\textnormal{op}} \lesssim \begin{cases} t^{-2(\alpha+1)(\frac{1}{p}-\frac{1}{q})} \quad & \text{if}\quad 0<t < \frac{\alpha+1}{K}\left(\frac{1}{p}-\frac{1}{q} \right) \\e^{-t \rho^2}\,\, e^{-\frac{(a+1)^2}{t}\left(\frac{1}{p}-\frac{1}{q}\right)^2} t^{-2(a+1)(\frac{1}{p}-\frac{1}{q})} \quad & \text{if}\quad t \geq  \frac{a+1}{K}\left(\frac{1}{p}-\frac{1}{q} \right).\end{cases}
    \end{align}
    Therefore, 
    \begin{align}
    \|u(t, \cdot)\|_{L^q(\mathbb{R}_+, Adx)} &\leq \|e^{-t L}u_0\|_{L^q(\mathbb{R}_+, Adx)} \leq \|e^{-tL}\|_{\text{op}} \|u\|_{L^p(\mathbb{R}_+, Adx)}\nonumber \\&\lesssim \|u\|_{L^p(\mathbb{R}_+, Adx)}\begin{cases} t^{-2(\alpha+1)(\frac{1}{p}-\frac{1}{q})}  & \text{if}\quad 0<t < \frac{\alpha+1}{K}\left(\frac{1}{p}-\frac{1}{q} \right) \\e^{-t \rho^2}\,\, e^{-\frac{(a+1)^2}{t}\left(\frac{1}{p}-\frac{1}{q}\right)^2} t^{-2(a+1)(\frac{1}{p}-\frac{1}{q})}  & \text{if}\quad t \geq  \frac{a+1}{K}\left(\frac{1}{p}-\frac{1}{q} \right).
    \end{cases}
    \end{align}

\subsubsection{Sobolev type embedding} Let $1<p\leq 2 \leq q<\infty.$ As an application of Theorem \ref{specmul}, we will obtain Sobolev type embedding theorem for operator $L.$  Consider the function $\varphi(u)= (1+u)^{-b},$ $u > \rho^2,$  then $\varphi$ satisfies the condition of Theorem 
\ref{specmul}. Therefore, from Theorem \ref{specmul} we have 
$$\|(1+L)^{-b}\|_{\text{op}} \lesssim \sup_{u >\rho^2} (1+u)^{-b} \begin{cases} (u-\rho^2)^{(a+1)(\frac{1}{p}-\frac{1}{q})} & \quad \textnormal{if} \quad (u-\rho^2)^{\frac{1}{2}} \leq K, \\ \left[K^{2a+2}- K^{2\alpha+2}+(u-\rho^2)^{(\alpha+1)} \right]^{\frac{1}{p}-\frac{1}{q}} & \quad \textnormal{if} \quad (u-\rho^2)^{\frac{1}{2}}>K,  \end{cases} $$ which is finite
provided that $b \geq (\alpha+1) \left( \frac{1}{p}-\frac{1}{q} \right).$

Therefore, we have 
$$\|f\|_{L^q(\R_+, A dx)} \leq C \|(1+L)^b f\|_{L^q(\R_+, A dx)}$$ provided that $b \geq (\alpha+1) \left( \frac{1}{p}-\frac{1}{q} \right),\,\,1<p\leq 2 \leq q<\infty.$

\section{Applications to nonlinear PDEs}
 In this section, we present applications of our main result on $L^p$-$L^q$ boundedness of Fourier multipliers on Ch\'ebli-Trim\'eche hypergroups to the well-posedness of abstract Cauchy problems on positive real line. Here, we adopt the techniques developed in \cite{CKNR} in the case of Fourier analysis associated to the biorthogonal eigenfunction expansion of a model operator having discrete spectrum.  
 \subsection{Nonlinear heat equation}
 Let us consider the following Cauchy problem for a nonlinear evolution equation in the space $L^\infty(0, T, L^2(\mathbb{R}_+, Adx)),$
 \begin{equation} \label{heat}
     u_t-|Bu(t)|^p=0,\quad u(0)=u_0,
 \end{equation}
 where $B$ is a linear operator on $L^2(\mathbb{R}_+, Adx)$ and $1<p<\infty.$
 
 We say that the heat equation \eqref{heat} admits a solution $u$ if we have  
 \begin{equation}\label{heatsol}
     u(t)=u_0+\int_0^t |Bu(\tau)|^p\, d\tau
 \end{equation} in the space $L^\infty(0, T, L^p(\mathbb{R}_+, Adx))$ for every $T<\infty.$
 We say that $u$ is a local solution of \eqref{heat} if it satisfies  equation \eqref{heatsol} in the space $L^\infty(0, T^*, L^2(\mathbb{R}_+, Adx))$ for some $T^*>0.$
 \begin{thm}
 Let $1<p<\infty.$ Suppose that $B$ is a Fourier multiplier from $L^2(\mathbb{R}_+, Adx)$ into $L^{2p}(\mathbb{R}_+, Adx)$  such that its symbol $h$ satisfies $$\sup_{s>0} s \left[ \int_{\{ \lambda \in \mathbb{R}_+: |h(\lambda)|\geq s\}} |c(\lambda)|^{-2}\, d\lambda \right]<\infty.$$ Then the Cauchy problem \eqref{heat} has a local solution in the space $L^\infty(0, T^*, L^2(\mathbb{R}_+, Adx))$ for some $T^*>0.$ 
 \end{thm}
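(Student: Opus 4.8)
The plan is to reformulate \eqref{heat} in its integral form \eqref{heatsol} as a fixed-point equation $u=\Phi(u)$ for the map
\begin{equation*}
\Phi(u)(t):=u_0+\int_0^t |Bu(\tau)|^p\, d\tau ,
\end{equation*}
and to run a contraction-mapping argument on a small closed ball $\mathcal{B}_R:=\{u\in L^\infty(0,T^*;L^2(\mathbb{R}_+,Adx)):\|u\|_{L^\infty(0,T^*;L^2)}\le R\}$. The single structural input is the boundedness of $B$ from $L^2$ to $L^{2p}$: since $h$ is bounded and satisfies the stated weak-type condition it lies in the relevant weak-$L^r$ class, so Theorem \ref{pqF} applied with the exponent pair $(2,2p)$ yields a bounded extension $B\colon L^2(\mathbb{R}_+,Adx)\to L^{2p}(\mathbb{R}_+,Adx)$; write $C_B$ for its norm. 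Because $Bu(\tau)\in L^{2p}$ forces $|Bu(\tau)|^p\in L^2$ with $\||Bu(\tau)|^p\|_{L^2}=\|Bu(\tau)\|_{L^{2p}}^p$, the map $\Phi$ is well defined as an $L^2$-valued Bochner integral, and its fixed points are precisely the solutions of \eqref{heat} in the sense of \eqref{heatsol}.

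First I would check the self-mapping property. For $u\in\mathcal{B}_R$ one has, for every $t\in[0,T^*]$,
\begin{equation*}
\|\Phi(u)(t)\|_{L^2}\le \|u_0\|_{L^2}+\int_0^{T^*}\|Bu(\tau)\|_{L^{2p}}^p\, d\tau\le \|u_0\|_{L^2}+T^*\,C_B^{\,p}\,R^p .
\end{equation*}
Choosing $R:=2\|u_0\|_{L^2}$ and then $T^*$ so small that $T^*C_B^{\,p}R^p\le R/2$ forces $\Phi(\mathcal{B}_R)\subseteq\mathcal{B}_R$.

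Next I would prove that $\Phi$ is a contraction on $\mathcal{B}_R$ after possibly shrinking $T^*$ once more. The key pointwise inequality is $\big||a|^p-|b|^p\big|\le p\big(|a|^{p-1}+|b|^{p-1}\big)|a-b|$ for $a,b\in\mathbb{C}$ and $p\ge 1$; combining it with H\"older's inequality in the form $\tfrac12=\tfrac{p-1}{2p}+\tfrac1{2p}$ and then the $L^2\to L^{2p}$ bound for $B$ gives, for $u,v\in\mathcal{B}_R$,
\begin{align*}
\big\||Bu(\tau)|^p-|Bv(\tau)|^p\big\|_{L^2}
&\le p\,\big(\|Bu(\tau)\|_{L^{2p}}^{p-1}+\|Bv(\tau)\|_{L^{2p}}^{p-1}\big)\,\|B(u-v)(\tau)\|_{L^{2p}}\\
&\le 2p\,C_B^{\,p}\,R^{\,p-1}\,\|u(\tau)-v(\tau)\|_{L^2}.
\end{align*}
Integrating in $\tau$ over $[0,T^*]$ yields $\|\Phi(u)-\Phi(v)\|_{L^\infty(0,T^*;L^2)}\le 2p\,C_B^{\,p}\,R^{\,p-1}\,T^*\,\|u-v\|_{L^\infty(0,T^*;L^2)}$, so taking $T^*$ with $2p\,C_B^{\,p}\,R^{\,p-1}\,T^*<1$ makes $\Phi$ a contraction. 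The Banach fixed-point theorem then produces a unique $u\in\mathcal{B}_R$ with $u=\Phi(u)$, which is the desired local solution in $L^\infty(0,T^*;L^2(\mathbb{R}_+,Adx))$.

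The main obstacle I anticipate is purely the handling of the non-integer power nonlinearity $w\mapsto|w|^p$: one must establish that it sends bounded subsets of $L^{2p}(\mathbb{R}_+,Adx)$ Lipschitz-continuously into $L^2(\mathbb{R}_+,Adx)$ — which is exactly what the pointwise inequality together with the H\"older splitting accomplishes — and, relatedly, one must check the strong measurability in $t$ needed for the Bochner integral defining $\Phi(u)$; this follows from the continuity of $B$ and of $w\mapsto|w|^p$ between the relevant Lebesgue spaces. All remaining steps are the standard Picard iteration scheme.
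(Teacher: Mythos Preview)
Your proposal is correct and follows essentially the same strategy as the paper: rewrite \eqref{heat} in integral form, use Theorem~\ref{pqF} to secure the $L^2\to L^{2p}$ bound for $B$, and run a fixed-point argument on a small ball in $L^\infty(0,T^*;L^2)$. The only noteworthy difference is that the paper's argument is sketchier: it derives the a~priori estimate $\|u\|_{L^\infty L^2}^2\le C(\|u_0\|_{L^2}^2+T^2\|u\|_{L^\infty L^2}^{2p})$, checks the self-mapping property on a set $S_c$, and then simply invokes ``the fixed point theorem'' without writing down a contraction estimate; you supply that missing piece explicitly via the pointwise bound $\big||a|^p-|b|^p\big|\le p(|a|^{p-1}+|b|^{p-1})|a-b|$ and the H\"older splitting $\tfrac12=\tfrac{p-1}{2p}+\tfrac1{2p}$, which makes your version more self-contained.
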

 \begin{proof}
 By integrating equation \eqref{heat} w.r.t. $t$ one get
$$
u(t)=u_{0} + \int\limits_0^t |Bu(\tau)|^{p} d\tau.
$$
By taking the $L^2$-norm on both sides, one obtains
\begin{align*}
    \|u(t)\|_{L^{2}(\mathbb{R}_+, Adx)}^{2}  &\leq C \Bigg(\|u_0\|_{L^2(\mathbb{R}_+, Adx)}^2+\left\| \int_0^t |Bu(t)|^p\, d\tau \right\|^2_{L^2(\mathbb{R}_+, Adx)} \Bigg)\\& = C \Bigg(\|u_0\|_{L^2(\mathbb{R}_+, Adx)}^2+ \int_{\mathbb{R}_+} \left| \int_0^t |Bu(t)|^p\, d\tau  \right|^2 A(x)\,dx\Bigg).
\end{align*}
Using the inequality $\int_0^t|Bu(\tau)|^p\, d\tau \leq (\int_0^t 1 \, d\tau)^{\frac{1}{2}} (\int_0^t|Bu(\tau)|^{2p}\, d\tau)^{\frac{1}{2}}= t^{\frac{1}{2}} (\int_0^t|Bu(\tau)|^{2p}\, d\tau)^{\frac{1}{2}}$ we get
\begin{align*}
    \|u(t)\|_{L^{2}(\mathbb{R}_+, Adx)}^{2}  &\leq C \Bigg(\|u_0\|_{L^2(\mathbb{R}_+, Adx)}^2+ t \int_{\mathbb{R}_+}   \int_0^t |Bu(t)|^{2p}\, d\tau\,  A(x)\,dx\Bigg)\\& \leq  C \Bigg(\|u_0\|_{L^2(\mathbb{R}_+, A\,dx)}^2+ t    \int_0^t \int_{\mathbb{R}_+} |Bu(t)|^{2p}\,   A(x)\,dx\, d\tau\Bigg) \\& \leq  C \Bigg(\|u_0\|_{L^2(\mathbb{R}_+, Adx)}^2+ t    \int_0^t \|Bu(t)\|^{2p}_{L^{2p}(\mathbb{R}_+, Adx)}\, d\tau\Bigg).
\end{align*}

Next, using the condition on the symbol it can be seen, as an application of Theorem \ref{pqF}, that the operator $B$ is a bounded operator, that is, $\|B u(t)\|_{L^{2p}(\mathbb{R}_+, A dx)} \leq C_1 \|u(t)\|_{L^{2}(\mathbb{R}_+, A dx)}$ and, therefore,  the above inequality yields
\begin{align}\label{EQ:space-norm}
    \|u(t)\|_{L^{2}(\mathbb{R}_+, A\,dx)}^{2} \leq C \Bigg(\|u_0\|_{L^2(\mathbb{R}_+, A dx)}^2+ t    \int_0^t  \|u(t)\|^{2p}_{L^{2}(\mathbb{R}_+, A dx)}\, d\tau\Bigg),
\end{align}
for some constant $C$ independent from $u_0$ and $t$.

Finally, by taking $L^{\infty}$-norm in time on both sides of the estimate \eqref{EQ:space-norm}, one obtains
\begin{equation}\label{EQ:time-space-norm}
\|u(t)\|_{L^{\infty}(0, T; L^{2}(\mathbb{R}_+, A dx))}^{2}\leq C\Big(\|u_{0}\|_{L^{2}(\mathbb{R}_+, A dx)}^{2} + T^{2} \|u\|^{2p}_{L^{\infty}(0, T; L^{2}(\mathbb{R}_+, A dx))}\Big).
\end{equation}

Let us introduce the following set
\begin{equation}\label{u-Set}
S_c:=\left\{u\in L^{\infty}(0, T; L^{2}(\mathbb{R}_+, A dx)): \|u\|_{L^{\infty}(0, T; L^{2}(\mathbb{R}_+, A dx))} \leq c \|u_{0}\|_{L^{2}(\mathbb{R}_+, A dx)}\right\},
\end{equation}
for some constant $c \geq 1$. Then, for $u \in S_c$ we have
$$
\|u_{0}\|_{L^{2}(\mathbb{R}_+, A dx)}^{2} + T^{2} \|u\|^{2p}_{L^{\infty}(0, T; L^{2}(\mathbb{R}_+, A dx))} \leq 
\|u_{0}\|_{L^{2}(\mathbb{R}_+, A dx)}^{2} + T^{2} c^{2p} \|u_0\|^{2p}_{L^{2}(\mathbb{R}_+, A dx)}.
$$
Finally, for $u$ to be from the set $S_c$ it is enough to have, by invoking \eqref{EQ:time-space-norm}, that 
$$
\|u_{0}\|_{L^{2}(\mathbb{R}_+, A dx)}^{2} + T^{2} c^{2p} \|u_0\|^{2p}_{L^{2}(\mathbb{R}_+, A dx)}\leq c^{2} \|u_0\|_{L^{2}(\mathbb{R}_+, A dx)}^{2}.
$$
It can be obtained by requiring the following,
$$
T \leq T^{\ast}:=\frac{\sqrt{c^{2}-1}}{c^{p}\|u_0\|_{L^{2}(\mathbb{R}_+, A dx)}}.
$$
Thus, by applying the fixed point theorem, there exists a unique local solution $u\in L^{\infty}(0, T^{\ast}; L^{2}(\mathbb{R}_+, A dx))$ of the Cauchy problem \eqref{heat}.
 \end{proof}
 
 \subsection{Nonlinear wave equation} In this subsection, we will consider that the initial value problem  for the nonlinear wave equation  
\begin{align}\label{E-WNLE}
u_{tt}(t) - b(t)|Bu(t)|^{p} = 0,
\end{align} with the initial condition 
$$
u(0)=u_0, \,\,\, u_t(0)=u_1,
$$
where $b$ is a positive bounded function depending only on time, $B$ is a linear operator in $L^2(\mathbb{R}_+, A dx)$ and $1< p<\infty$. We intend to study the well-posedness of the equation \eqref{E-WNLE}.

We say that initial valued problem \eqref{E-WNLE} admits a global solution $u$ if it satisfies
\begin{equation}\label{E-WNLE-Sol}
u(t)=u_{0} + t u_{1} + \int\limits_0^t (t-\tau) b(\tau) |Bu(\tau)|^{p} d\tau
\end{equation}
in the space $L^{\infty}(0, T; L^{2}(\mathbb{R}_+, A dx))$ for every $T<\infty$.

We say that \eqref{E-WNLE} admits a local solution $u$ if it satisfies
the equation \eqref{E-WNLE-Sol} in the space $L^{\infty}(0, T^{\ast}; L^{2}(\mathbb{R}_+, A dx))$ for some $T^{\ast}>0$.

\begin{thm}\label{Th: E-WNLE}
Let $1\leq p<\infty$. Suppose that $B$ is a Fourier multiplier from $L^2(\mathbb{R}_+, A dx)$ into $L^{2p}(\mathbb{R}_+, A dx)$  such that its symbol $h$ satisfies $$\sup_{s>0} s \left[ \int_{\{ \lambda \in \mathbb{R}_+: |h(\lambda)|\geq s\}} |c(\lambda)|^{-2}\, d\lambda \right]<\infty.$$

\begin{itemize}
    \item [(i)] If $\|b\|_{L^{2}(0, T)}<\infty$ for some $T>0$ then the Cauchy problem \eqref{E-WNLE} has a local solution in  $L^{\infty}(0, T; L^{2}(\mathbb{R}_+, A dx))$.
    \item [(ii)] Suppose that $u_1$ is identically equal to zero. Let $\gamma>3/2$. Moreover, assume that $\|b\|_{L^{2}(0, T)}\leq c \, T^{-\gamma}$ for every $T>0$, where $c$ does not depend on $T$. Then, for every $T>0$, the Cauchy problem \eqref{E-WNLE} has a global solution in the space $L^{\infty}(0, T; L^{2}(\mathbb{R}_+, A dx))$ for sufficiently small $u_0$ in $L^2$-norm.
\end{itemize}
\end{thm}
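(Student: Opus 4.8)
The plan is to run a Banach fixed‑point argument for the Duhamel operator
\[
\Phi u(t):=u_0+t\,u_1+\int_0^t (t-\tau)\,b(\tau)\,|Bu(\tau)|^{p}\,d\tau
\]
on a closed ball of $X_T:=L^\infty(0,T;L^2(\mathbb{R}_+,A\,dx))$, following verbatim the scheme used above for the nonlinear heat equation \eqref{heat}. The hypergroup analysis enters only once: applying Theorem \ref{pqF} with the exponent pair $(2,2p)$, the hypothesis on the symbol $h$ of $B$ forces $B$ to extend to a bounded operator $B\colon L^2(\mathbb{R}_+,A\,dx)\to L^{2p}(\mathbb{R}_+,A\,dx)$, say with norm $C_B$.

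First I would record the basic estimate. Taking the $L^2_x$-norm in the definition of $\Phi u$, applying Minkowski's integral inequality to pull the norm inside the $\tau$-integral, then using $\big\||Bu(\tau)|^{p}\big\|_{L^2_x}=\|Bu(\tau)\|_{L^{2p}_x}^{p}\le C_B^{p}\|u(\tau)\|_{L^2_x}^{p}$, and finally Cauchy--Schwarz in $\tau$ together with the crude bounds $t-\tau\le T$ and $\|u(\tau)\|_{L^2_x}\le\|u\|_{X_T}$, one arrives at
\[
\|\Phi u\|_{X_T}\le C\Big(\|u_0\|_{L^2(\mathbb{R}_+,A\,dx)}+T\|u_1\|_{L^2(\mathbb{R}_+,A\,dx)}+T^{3/2}\,\|b\|_{L^2(0,T)}\,\|u\|_{X_T}^{p}\Big).
\]
A parallel computation, based on the pointwise bound $\big||a|^{p}-|b|^{p}\big|\le p\big(|a|^{p-1}+|b|^{p-1}\big)|a-b|$, H\"older's inequality with exponents $\tfrac{2p}{p-1}$ and $2p$, and the boundedness and linearity of $B$, yields the Lipschitz estimate
\[
\|\Phi u-\Phi v\|_{X_T}\le C\,T^{3/2}\,\|b\|_{L^2(0,T)}\big(\|u\|_{X_T}^{p-1}+\|v\|_{X_T}^{p-1}\big)\,\|u-v\|_{X_T}.
\]

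For part (i), since $\|b\|_{L^2(0,T)}<\infty$ the coefficient $T^{3/2}\|b\|_{L^2(0,T)}$ tends to $0$ as $T\to0$, so choosing $T=T^\ast$ small makes $\Phi$ map the ball $S_c:=\{u\in X_{T^\ast}:\|u\|_{X_{T^\ast}}\le c(\|u_0\|_{L^2}+T^\ast\|u_1\|_{L^2})\}$ into itself and act there as a contraction; its fixed point is the desired local solution. For part (ii) one uses $u_1\equiv0$ and $\|b\|_{L^2(0,T)}\le c\,T^{-\gamma}$ with $\gamma>3/2$, so that $T^{3/2}\|b\|_{L^2(0,T)}\le c\,T^{3/2-\gamma}$ is nonincreasing in $T$ and hence bounded by $c$ for all $T\ge1$. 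For such $T$ the self‑mapping condition reads $C\|u_0\|_{L^2}+C'R^{p}\le R$ in the ball radius $R$, which is solvable (with $R$ of order $(pC')^{-1/(p-1)}$) precisely when $\|u_0\|_{L^2}<\varepsilon_0$ for a threshold $\varepsilon_0$ independent of $T$; after possibly shrinking $\varepsilon_0$, the Lipschitz estimate makes $\Phi$ a contraction on that ball, giving a solution of \eqref{E-WNLE} on $[0,T]$. For $0<T<1$ one simply restricts the solution on $[0,1]$. Thus, for $\|u_0\|_{L^2}<\varepsilon_0$ the problem \eqref{E-WNLE} has a solution in $L^\infty(0,T;L^2(\mathbb{R}_+,A\,dx))$ for every $T>0$.

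I expect the main obstacle to be the uniform-in-$T$ bookkeeping in part (ii): one must check that the two Cauchy--Schwarz steps together with the factor $t-\tau\le T$ produce exactly the power $T^{3/2}$, so that $\gamma>3/2$ is the sharp threshold making the nonlinear coefficient bounded as $T\to\infty$; only then can the small-time regime be absorbed by restriction rather than requiring $u_0$ to shrink with $T$. The remaining steps -- the Minkowski/H\"older manipulations and arranging that the contraction constant and the self-mapping radius can be chosen simultaneously -- are routine, exactly as in the heat-equation case.
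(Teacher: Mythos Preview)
Your proposal is correct and follows the paper's approach: write the Duhamel formula, use Theorem~\ref{pqF} to bound $B\colon L^2\to L^{2p}$, derive the a~priori estimate with the factor $T^{3/2}\|b\|_{L^2(0,T)}$, and run a fixed-point argument in $L^\infty(0,T;L^2)$. The only notable differences are that you supply the Lipschitz/contraction estimate explicitly (the paper merely invokes ``the fixed point theorem'' after the self-mapping bound), and that in part~(ii) you observe $T^{3/2}\|b\|_{L^2(0,T)}\le cT^{3/2-\gamma}$ stays uniformly bounded for $T\ge 1$ and handle $T<1$ by restriction, whereas the paper works on the $T$-weighted ball $\{\|u\|_{X_T}^2\le cT^{\gamma_0}\|u_0\|_{L^2}^2\}$ with $0<\gamma_0<(2\gamma-3)/p$; your variant is slightly cleaner and in fact gives a $T$-independent smallness threshold on $u_0$.
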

\begin{proof} (i) By integrating the equation \eqref{E-WNLE} two times  in $t$ one gets
$$
u(t)=u_{0} + t u_{1} + \int\limits_0^t (t-\tau) b(\tau) |Bu(\tau)|^{p} d\tau.
$$
By taking the $L^2$-norm on both sides, for $t<T$ one obtains by simple calculation that

\begin{align*}
    &\|u(t)\|_{L^{2}(\mathbb{R}_+, A dx)}^{2}\leq  C\left\{ \|u_{0}\|_{L^{2}(\mathbb{R}_+, A dx)}^{2} + t^2 \|u_{1}\|_{L^{2}(\mathbb{R}_+, A dx)}^{2}+ \left\|\int\limits_0^t (t-\tau) b(\tau) |Bu(\tau)|^{p} d\tau \right\|_{L^{2}(\mathbb{R}_+, A dx)}^{2} \right\} \\& \leq C\left\{ \|u_{0}\|_{L^{2}(\mathbb{R}_+, A dx)}^{2} + t^2 \|u_{1}\|_{L^{2}(\mathbb{R}_+, A dx)}^{2}+ \int_{\mathbb{R}_+} \Big|\int\limits_0^t (t-\tau) b(\tau) |Bu(\tau)|^{p} d\tau \Big|^2 A(x) dx \right\} \\& \leq C\left\{ \|u_{0}\|_{L^{2}(\mathbb{R}_+, A dx)}^{2} + t^2 \|u_{1}\|_{L^{2}(\mathbb{R}_+, A dx)}^{2}+ \int_{\mathbb{R}_+} \Big(t \int\limits_0^t \Big| b(\tau)|Bu(\tau)|^{p}\Big| d\tau \Big)^{2} A(x) dx \right\} \\& \leq C\left\{ \|u_{0}\|_{L^{2}(\mathbb{R}_+, A dx)}^{2} + t^2 \|u_{1}\|_{L^{2}(\mathbb{R}_+, A dx)}^{2}+ \int_{\mathbb{R}_+} t^{2} \int\limits_0^t \Big| b(\tau)\Big|^{2} d\tau \int\limits_0^t \Big| Bu(\tau)\Big|^{2p} d\tau A(x) dx \right\} \\& \leq C\left\{ \|u_{0}\|_{L^{2}(\mathbb{R}_+, A dx)}^{2} + t^2 \|u_{1}\|_{L^{2}(\mathbb{R}_+, A dx)}^{2}+ t^2 \|b\|_{L^2(0,T)}^2 \int_{\mathbb{R}_+}  \int\limits_0^t \Big| Bu(\tau)\Big|^{2p} d\tau A(x) dx \right\} \\& \leq C\left\{ \|u_{0}\|_{L^{2}(\mathbb{R}_+, A dx)}^{2} + t^2 \|u_{1}\|_{L^{2}(\mathbb{R}_+, A dx)}^{2}+ t^2 \|b\|_{L^2(0,T)}^2 \int\limits_0^t \|Bu(\tau)\|^{2p}_{L^{2p}(\mathbb{R}_+, A dx)} d\tau \right\}.
\end{align*}
Next, using the condition on the symbol it can be seen, as an application of Theorem \ref{pqF}, that the operator $B$ is a bounded operator, that is, $\|B u(t)\|_{L^{2p}(\mathbb{R}_+, A dx)} \leq C_1 \|u(t)\|_{L^{2}(\mathbb{R}_+, A dx)}$ and, therefore, the above inequality yields
\begin{equation}
\label{EQ: WE-space-norm}
\|u(t)\|_{L^{2}(\mathbb{R}_+, A dx)}^{2}\leq C(\|u_{0}\|_{L^{2}(\mathbb{R}_+, A dx)}^{2} + t^2 \|u_{1}\|_{L^{2}(\mathbb{R}_+, A dx)}^{2}+ t^{2} \|b\|_{L^{2}(0, T)}^{2} \int\limits_0^t \|u(\tau)\|^{2p}_{L^{2p}(\mathbb{R}_+, A dx)} d\tau),
\end{equation}
for some constant $C$ not depending on $u_0, u_1$ and $t$. Finally, by taking the $L^{\infty}$-norm in time on both sides of the estimate \eqref{EQ: WE-space-norm}, one obtains
\begin{equation}
\label{EQ: WE-time-space-norm}
\|u\|_{L^{\infty}(0, T; L^{2}(\mathbb{R}_+, A dx))}^{2}\leq C (\|u_{0}\|_{L^{2}(\mathbb{R}_+, A dx)}^{2} + T^2 \|u_{1}\|_{L^{2}(\mathbb{R}_+, A dx)}^{2}+ T^{3} \|b\|_{L^{2}(0, T)}^{2} \|u\|^{2p}_{L^{\infty}(0, T; L^{2}(\mathbb{R}_+, A dx))}). 
\end{equation}

Let us introduce the set
\begin{equation}
Q_c:=\Big\{u\in L^{\infty}(0, T; L^{2}(\mathbb{R}_+, A dx)): \|u\|_{L^{\infty}(0, T; L^{2}(\mathbb{R}_+, A dx))}^{2} \leq
c(\|u_{0}\|_{L^{2}(\mathbb{R}_+, A dx)}^{2} + T^2 \|u_{1}\|_{L^{2}(\mathbb{R}_+, A dx)}^{2})\Big\}
\end{equation}
for some constant $c \geq 1$. Then, for $u \in  Q_c$ we have
\begin{align}\label{WE-Est}
&\|u_{0}\|_{L^{2}(\mathbb{R}_+, A dx)}^{2} + T^2 \|u_{1}\|_{L^{2}(\mathbb{R}_+, A dx)}^{2} + T^{3} \|b\|_{L^{2}(0, T)}^{2} \|u\|^{2p}_{L^{\infty}(0, T; L^{2}(\mathbb{R}_+, A dx))} \nonumber \\
&\leq \|u_{0}\|_{L^{2}(\mathbb{R}_+, A dx)}^{2} + T^2 \|u_{1}\|_{L^{2}(\mathbb{R}_+, Adx)}^{2} + T^{3} \|b\|_{L^{2}(0, T)}^{2} c^{p}\Big(\|u_{0}\|_{L^{2}(\mathbb{R}_+, A dx)}^{2} + T^2 \|u_{1}\|_{L^{2}(\mathbb{R}_+, A dx)}^{2}\Big)^{p}. 
\end{align}

Observe that, to be $u$ from the set $Q_c$ it is enough to have, by invoking \eqref{EQ: WE-time-space-norm} and using \eqref{WE-Est}, that 
\begin{align*}
\begin{split}
\|u_{0}\|_{L^{2}(\mathbb{R}_+, A dx)}^{2} + &T^2 \|u_{1}\|_{L^{2}(\mathbb{R}_+, A dx)}^{2} + T^{3} \|b\|_{L^{2}(0, T)}^{2} c^{p}\Big(\|u_{0}\|_{L^{2}(\mathbb{R}_+, A dx)}^{2} + T^2 \|u_{1}\|_{L^{2}(\mathbb{R}_+, A dx)}^{2}\Big)^{p}\\
&\leq c(\|u_{0}\|_{L^{2}(\mathbb{R}_+, A dx)}^{2} + T^2 \|u_{1}\|_{L^{2}(\mathbb{R}_+, A dx)}^{2}).
\end{split}
\end{align*}
It can be obtained by requiring the following
$$
T \leq T^{\ast}:=\min\left[\left(\frac{c-1}{\|b\|_{L^{2}(0, T)}^{2}c^{p}\|u_0\|_{L^{2}(\mathbb{R}_+, A dx)}^{2p-2}}\right)^{1/3}, \, \left(\frac{c-1}{\|b\|_{L^{2}(0, T)}^{2}c^{p}\|u_1\|_{L^{2}(\mathbb{R}_+,  A dx)}^{2p-2}}\right)^{\frac{1}{3}}\right].
$$
Thus, by applying the fixed point theorem, there exists a unique local solution $u\in L^{\infty}(0, T^{\ast}; L^{2}(\mathbb{R}_+, A dx))$ of the Cauchy problem \eqref{E-WNLE}.

To prove Part (ii), we  repeat the arguments of the proof of Part (i) to get \eqref{EQ: WE-time-space-norm}. Now, by taking into account assumptions on $u_1$ and $b$ inequality \eqref{EQ: WE-time-space-norm} yields
\begin{equation}
\label{EQ: WE-time-space-norm-2}
\|u\|_{L^{\infty}(0, T; L^{2}(\mathbb{R}_+, A dx))}^{2}\leq C \Big(\|u_{0}\|_{L^{2}(\mathbb{R}_+, A dx)}^{2} + T^{3-2\gamma}  \|u\|^{2p}_{L^{\infty}(0, T; L^{2}(\mathbb{R}_+, A dx))}\Big). 
\end{equation}

For a fixed constant $c \geq 1$, let us introduce the set
$$
R_c:=\Big\{u\in L^{\infty}(0, T; L^{2}(\mathbb{R}_+, A dx)): \|u\|_{L^{\infty}(0, T; L^{2}(\mathbb{R}_+, A dx))}^{2} \leq c T^{\gamma_{0}}\|u_{0}\|_{L^{2}(\mathbb{R}_+, A dx)}^{2}\Big\},
$$
with $\gamma_{0}>0$ is to be defined later. Now, note that
\begin{align*}
\|u_{0}\|_{L^{2}(\mathbb{R}_+, A dx)}^{2} + T^{3-2\gamma}  \|u\|^{2p}_{L^{\infty}(0, T; L^{2}(\mathbb{R}_+, A dx))} 
\leq \|u_{0}\|_{L^{2}(\mathbb{R}_+, A dx)}^{2} + T^{3-2\gamma+\gamma_{0}p} c^{p} \|u_{0}\|_{L^{2}(\mathbb{R}_+, A dx)}^{2p}.    
\end{align*}

To guarantee $u\in R_c$, by invoking \eqref{EQ: WE-time-space-norm-2} we require that
\begin{align*}
\|u_{0}\|_{L^{2}(\mathbb{R}_+, A dx)}^{2} + T^{3-2\gamma+\gamma_{0}p} c^{p} \|u_{0}\|_{L^{2}(\mathbb{R}_+, A dx)}^{2p} \leq c T^{\gamma_{0}} \|u_{0}\|_{L^{2}(\mathbb{R}_+, A dx)}^{2}.   
\end{align*}
Now by choosing $0<\gamma_0<\frac{2\gamma-3}{p}$ such that
$
\tilde{\gamma}:=3-2\gamma+\gamma_{0}p<0,
$ we obtain
$$
c^{p} \|u_{0}\|_{L^{2}(\mathbb{R}_+, A dx)}^{2p-2} \leq c T^{-\tilde{\gamma}+\gamma_{0}}.
$$
From the last estimate, we conclude that for any $T>0$ there exists sufficiently small $\|u_{0}\|_{L^{2}(\mathbb{R}_+, A dx)}$ such that IVP \eqref{E-WNLE} has a solution. It proves Part (ii) of Theorem \ref{Th: E-WNLE}.
\end{proof}

%


\section*{Data Availability Statements} Data sharing not applicable to this article as no datasets were generated or analysed during the current study.

\section*{Acknowledgment}
	VK and MR are supported by FWO Odysseus 1 grant G.0H94.18N: Analysis and Partial Differential Equations and by the Methusalem programme of the Ghent University Special Research Fund (BOF)
(Grant number 01M01021). MR is also supported  by the EPSRC Grant EP/R003025/2 and by the FWO grant G022821N.

	\end{document}